\theoremstyle{theorem}
\newtheorem{theorem}{Theorem}[section]
\newtheorem{corollary}[theorem]{Corollary}
\newtheorem{proposition}[theorem]{Proposition}
\newtheorem{definition}{Definition}[section]
\newtheorem{example}{Example}[section]
\newtheorem{remark}{Remark}[section]
\numberwithin{equation}{section}
\newcommand{\s}{\sigma}
\DeclareSymbolFont{symbolsC}{U}{txsyc}{m}{n}
\DeclareMathSymbol{\notniFromTxfonts}{\mathrel}{symbolsC}{61}
\title{Prime One-Sided Ideals in Principal Ideal Domains} 
\author{Masood Aryapoor\\
	\tiny{\textit{Division of Mathematics and Physics}}\\
	\tiny{\textit{M\"{a}lardalen  University}}\\
	\tiny{\textit{Hamngatan 15, 632 17, Eskilstuna, 
			Sweden
	}}
}
 \date{}
\begin{document}
 \maketitle
\begin{abstract}
This article investigates various notions of primeness for one-sided ideals in noncommutative rings, with a focus on principal ideal domains. 

\end{abstract}
\begin{section}{Introduction} 
 The concept of primeness in ring theory has long served as a cornerstone for understanding the structural properties of rings and their ideals. While the classical notion of prime ideals in commutative rings is well-established, extending  this idea to noncommutative settings introduces a rich landscape of possibilities and challenges. 
 This article explores various generalizations of prime ideals to the realm of one-sided ideals, particularly within the framework of principal ideal domains (PIDs).
 
 In Section \ref{sec: primeness}, we begin by briefly revisiting foundational definitions such as completely prime and prime two-sided ideals, and then review four distinct notions of primeness for one-sided ideals: extremely prime, completely prime, structurally prime, and weakly prime. We examine each notion through rigorous definitions, illustrative examples, and key theorems that highlight their algebraic significance and interrelations. Section \ref{sec: primness for PID} provides several descriptions of one-sided primness for principal ideal domains. In particular, the results demonstrate how properties such as irreducibility,
 invariance, and boundedness govern the behavior of left ideals in PIDs. Our findings indicate that the notions of extremely prime, structurally prime and weakly prime ideals are primarily governed by the presence of invariant and bounded elements, whereas the concept of completely prime one-sided ideals is more closely associated with irreducibility. Although our primary focus is on PIDs, it is worth noting that some of our results hold for the more general class of 2-firs. 
 
In this article, all rings are assumed to be associative and unital, but not necessarily commutative.


\end{section}
\begin{section}{Prime One-Sided Ideals}\label{sec: primeness} 

In this section, we review several concepts of primness for both two-sided ideals and one-sided ideals. For early foundational work on these concepts, see   the article by Duan \cite{dauns1973one}. 
	\begin{subsection}{Prime ideals}
		We begin by a short discussion about the notion of primeness for two-sided ideals. Recall that a proper two-sided ideal \( \textswab{p} \) in a commutative ring \( R \) is called  prime if for \( a,b\in R\), \( ab\in \textswab{p}\) implies \( a\in \textswab{p}\) or \( b\in \textswab{p}\), or equivalently, the quotient ring \( R/  \textswab{p}\) is a domain. Generalizing this definition to  the noncommutative case leads to the following definition, which was already pointed out by H. Fitting in 1935 \cite{Fitting1935prim}:
		\begin{definition}\label{def: completely prime ideal}
			An ideal  \( \textswab{p} \) in a  ring \( R \) is said to be \emph{completely prime} if \( \textswab{p} \neq R \), and for \( a,b\in R\), \( ab\in \textswab{p}\) implies \( a\in \textswab{p}\) or \( b\in \textswab{p}\), or equivalently, the quotient ring \( R/  \textswab{p}\) is a domain.
		\end{definition}
		In  \cite{McCoy1949prime}, N. H. McCoy commented that  this notion is not useful since ``a noncommutative ring seldom contains very many completely prime ideals". For example, a simple ring with zero-divisors has no completely prime ideals. However, reduced rings (i.e., rings without nonzero nilpotent elements) have enough completely prime ideals in the sense that the intersection of all the completely prime ideals in a reduced ring is the zero ideal. More generally, completely prime ideals can be used to characterize reduced ideals. Recall that a two-sided ideal  \( \textswab{A} \) in a  ring \( R \) is called \textit{reduced} if, for \(a\in R\) and \(n\geq 1\), \( a^n\in \textswab{A}\) implies \( a\in \textswab{A}\).
		\begin{theorem}\label{thm: reduced ideal as intersection of completely prime ideals}
			An  ideal \( \textswab{A} \) of a ring \( R \) is reduced iff \( \textswab{A} \) can be written as an intersection of completely prime ideals in \( R \).
		\end{theorem}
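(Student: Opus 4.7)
The plan cleanly splits into the two implications.

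For $(\Leftarrow)$, if $\textswab{A} = \bigcap_{i} \textswab{p}_i$ with each $\textswab{p}_i$ completely prime and $a^n \in \textswab{A}$, then in each $\textswab{p}_i$ the factorization $a^n = a \cdot a^{n-1}$ together with an induction on $n$ using complete primeness of $\textswab{p}_i$ gives $a \in \textswab{p}_i$, so $a \in \textswab{A}$.

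For $(\Rightarrow)$, I would pass to $R/\textswab{A}$, which is a reduced ring by hypothesis, and reduce the problem to showing that in a reduced ring $R$, every nonzero element $a$ lies outside some completely prime ideal. The construction is via Zorn's lemma on the family $\mathcal{F}$ of \emph{reduced} two-sided ideals of $R$ not containing $a$: $\mathcal{F}$ is nonempty (it contains $(0)$) and closed under unions of chains (a union of reduced ideals is reduced), so it has a maximal element $\textswab{p}$, and then $R/\textswab{p}$ is reduced. The analysis rests on three identities valid in any reduced ring, each proved by expanding an appropriate square: (L1) $xy = 0 \Rightarrow yx = 0$; (L2) $xy = 0 \Rightarrow xRy = 0$ (using L1); (L3) $r^n a = 0 \Rightarrow ra = 0$, by a short induction from L1 and L2. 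All three identities then pass to the reduced quotient $R/\textswab{p}$.

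The main obstacle is to show that this $\textswab{p}$ is completely prime. Supposing for contradiction that $xy \in \textswab{p}$ with $x, y \notin \textswab{p}$, L2 applied in $R/\textswab{p}$ promotes $xy \in \textswab{p}$ to $xRy \subseteq \textswab{p}$, and hence $\langle x \rangle \langle y \rangle \subseteq \textswab{p}$. Let $\textswab{p}_x, \textswab{p}_y$ denote the smallest reduced ideals of $R$ containing $\{\textswab{p}, x\}$ and $\{\textswab{p}, y\}$ respectively; each strictly contains $\textswab{p}$, so maximality forces $a \in \textswab{p}_x \cap \textswab{p}_y$. I would then show $(\textswab{p}_x/\textswab{p})(\textswab{p}_y/\textswab{p}) = 0$ in $S := R/\textswab{p}$ by an annihilator argument: for any subset $A \subseteq S$, L1--L3 together imply that $\operatorname{Ann}_r(A) = \{s \in S : As = 0\}$ is a reduced two-sided ideal (two-sidedness uses L2, reducedness uses L3). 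Taking $A = \langle \bar x \rangle$, the relation $\langle \bar x \rangle \langle \bar y \rangle = 0$ gives $\langle \bar y \rangle \subseteq \operatorname{Ann}_r(\langle \bar x \rangle)$, and since the latter is reduced it must also contain $\textswab{p}_y/\textswab{p}$, the smallest reduced ideal containing $\langle \bar y \rangle$. A second identical application with $A = \textswab{p}_y/\textswab{p}$ yields $\textswab{p}_x/\textswab{p} \subseteq \operatorname{Ann}_\ell(\textswab{p}_y/\textswab{p})$, establishing the desired product identity. Any element of $\textswab{p}_x/\textswab{p} \cap \textswab{p}_y/\textswab{p}$ then squares to zero and is itself zero by reducedness of $S$, contradicting $\bar a \in \textswab{p}_x/\textswab{p} \cap \textswab{p}_y/\textswab{p}$. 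Hence $\textswab{p}$ is completely prime, and intersecting these ideals over all $a \notin \textswab{A}$ recovers $\textswab{A}$.
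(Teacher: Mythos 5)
Your proof is correct, but it is considerably more self-contained than the paper's. The paper disposes of the hard direction by citation: the case $\textswab{A}=\{0\}$ (that the completely prime ideals of a reduced ring intersect to zero) is attributed to Andrunakievich and Ryabukhin, with alternative proofs referenced in Klein and in Lam's book, and the general case is obtained exactly as you do, by passing to $R/\textswab{A}$. You instead reprove the zero-ideal case from scratch: Zorn's lemma applied to the family of reduced ideals avoiding a fixed nonzero $a$, the three identities (L1)--(L3) valid in reduced rings, and the observation that one-sided annihilators of subsets of a reduced ring are reduced two-sided ideals, which forces a maximal member $\textswab{p}$ of the family to be completely prime. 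This is essentially the standard modern argument (close to the one in Lam's Theorem 12.7), and all the steps check out: the smallest reduced ideal containing a set exists because intersections and unions of chains of reduced ideals are reduced, reduced ideals of $R$ containing $\textswab{p}$ correspond to reduced ideals of $R/\textswab{p}$, and the final contradiction $\bar a^2=0$ with $\bar a\neq 0$ is legitimate since $R/\textswab{p}$ is reduced. What the paper's version buys is brevity; what yours buys is a complete, reference-free proof that also exposes the mechanism (maximal reduced ideals avoiding an element are completely prime) that the citation hides. The only cosmetic caveat is that your Zorn argument should be run in $R/\textswab{A}$ (or, equivalently, on reduced ideals of $R$ containing $\textswab{A}$), which you do say but could state more explicitly when you conclude by intersecting over all $a\notin\textswab{A}$.
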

		\begin{proof}
			The case \( \textswab{A} =\{0\} \) is due to Andrunakievich and Ryabukhin \cite{andrunakievic1968rings} (For other proofs of this result, see \cite{klein1980simple} and \cite[Theorem 12.7]{lam2001afirst}). The general case follows immediately by applying the special case to the ring \( R / \textswab{A} \). See also \cite[p. 195]{lam2001afirst}.
		\end{proof}
		We now turn to the standard definition of a prime ideal:
		\begin{definition}\label{def: prime ideal}
			An ideal  \( \textswab{p} \) in a  ring \( R \) is said to be \emph{prime} if \( \textswab{p} \neq R \), and for ideals \( \textswab{A},\textswab{B}\) in \(R\), \( \textswab{A}\textswab{B}\subseteq\textswab{p}\) implies \( \textswab{A}\subseteq\textswab{p}\) or \( \textswab{B}\subseteq\textswab{p}\).
		\end{definition}
		 In  \cite{McCoy1949prime}, N. H. McCoy initiated the systematic study of this notion and related concepts. In particular, he obtained several characterizations of prime ideals \cite[Theorem 1]{McCoy1949prime}. It is evident that every completely prime ideal is a prime ideal. Moreover, any ring has at least one prime ideal as every maximal ideal is prime.   An important result is that a two-sided ideal \( \textswab{C} \) in a  ring \( R \) is semiprime (i.e.,  \( \textswab{A}^2\subseteq\textswab{C}\) implies \( \textswab{A}\subseteq\textswab{C}\), for every ideal \(\textswab{A}\) of  \(R\)) iff \( \textswab{C} \) can be written as an intersection of prime ideals. For a detailed discussion of prime ideals and related concepts, we refer the reader to Lam's book \cite[Chapter 4]{lam2001afirst}. 
	 \begin{remark}
		We have mentioned two of the most studied notions of prime ideals in the noncommutative setting. However, other notions of prime ideals also appear in the literature; for example, the notion of a strongly prime ideal, which is motivated by the concept of pseudo-valuation rings \cite{hedstrom1978pseudo}. For another generalization of primeness, see \cite{murata1969generalization}.
	\end{remark}
	\end{subsection}

 \begin{subsection}{Extremely Prime Left Ideals}
 	The first notion of a prime one-sided ideal is obtained by generalizing the concept of a completely prime ideal:
 	\begin{definition}\label{def: extremely prime left ideal}
 		A left ideal  \( \textswab{p} \) in a  ring \( R \) is said to be \emph{extremely prime} if \( \textswab{p} \neq R \), and  and for \( a,b\in R\), \( ab\in \textswab{p}\) implies \( a\in \textswab{p}\) or \( b\in \textswab{p}\). 
 	\end{definition}
 	
 	\begin{remark}
 		Some authors use the term ``completely prime" instead of ``extremely prime". Here, we prefer the phrase ``extremely prime", suggested by  Reyes \cite{reyes2010one}, and reserve the former phrase for another notion, introduced and studied by  Reyes \cite{reyes2010one}.   
 	\end{remark}
 	A disadvantage of this concept is that there exist rings that have no completely prime one-sided ideals. Examples of such rings are simple rings with nontrivial idempotents, e.g. matrix rings over division rings, (for a proof of this, see \cite[Proposition 2.4]{reyes2010one}). It is worth noting that the condition defining the notion is left-right symmetric, which does not reflect the fact that it concerns one-sided ideals. Although the notion of an extremely prime left ideal does not seem particularly useful, we remark that in \cite{andrunakievich1984completely}, Andrunakievich utilized this notion to derive a criterion for determining whether a ring is isomorphic to a subdirect product of division rings.  

 \end{subsection}
\begin{subsection}{Completely Prime Left Ideals}
	In \cite{reyes2010one}, M.M. L.  Reyes introduced and studied a new notion of a prime one-sided ideal, which he termed ``completely prime".  He argued that completely prime one-sided ideals  control the ``one-sided structure of a ring". Let us first review the concept of a completely prime left ideal.  
	 \begin{definition}\label{def: completely prime left ideal}
		A left ideal  \( \textswab{p} \) in a  ring \( R \) is said to be \emph{completely prime} if \( \textswab{p} \neq R \), and   for \( a,b\in R\), \( ab\in \textswab{p}\) and \( \textswab{p}b\subseteq\textswab{p}\) imply \( a\in \textswab{p}\) or \( b\in \textswab{p}\). 
	\end{definition}	
	To demonstrate that the concept of a completely prime one-sided ideal is a suitable tool for studying the one-sided structure of a ring,  Reyes proved, among other things, a noncommutative version of Cohen's Theorem  (\cite[Theorem 3.8]{reyes2010one}):
	\begin{theorem}\label{thm: Noncommutative Cohen using completely prime left ideals}
		A ring \( R \) is left noetherian iff every completely prime left ideal of \( R \) is finitely generated. 
	\end{theorem}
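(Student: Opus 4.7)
The ``only if'' direction is immediate: in a left noetherian ring every left ideal, a fortiori every completely prime left ideal, is finitely generated. The real content is the converse, and I would prove it by adapting the classical Cohen argument to the one-sided setting. Suppose, for contradiction, that $R$ is not left noetherian. The family $\mathcal{F}$ of left ideals that are not finitely generated is then nonempty, and if $\{\textswab{a}_\lambda\}$ is a chain in $\mathcal{F}$, its union $\textswab{a} = \bigcup \textswab{a}_\lambda$ is again not finitely generated: any finite set of generators would lie in some $\textswab{a}_\lambda$, making $\textswab{a}_\lambda = \textswab{a}$ finitely generated. Zorn's lemma therefore furnishes a maximal element $\textswab{p} \in \mathcal{F}$.

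The heart of the argument is to show that $\textswab{p}$ is completely prime in the sense of Definition~\ref{def: completely prime left ideal}; once this is established, the hypothesis forces $\textswab{p}$ to be finitely generated, contradicting $\textswab{p} \in \mathcal{F}$. So suppose $ab \in \textswab{p}$ and $\textswab{p}b \subseteq \textswab{p}$, and assume for contradiction that $a \notin \textswab{p}$ and $b \notin \textswab{p}$. Consider the two auxiliary left ideals
\[
\textswab{p} + Rb \qquad \text{and} \qquad I := \{r \in R : rb \in \textswab{p}\}.
\]
Both strictly contain $\textswab{p}$: the first because $b \notin \textswab{p}$, and the second because $\textswab{p} \subseteq I$ (here the assumption $\textswab{p}b \subseteq \textswab{p}$ is essential) while $a \in I \setminus \textswab{p}$. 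By maximality of $\textswab{p}$ in $\mathcal{F}$, both are finitely generated.

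Now I would combine these to finitely generate $\textswab{p}$ itself. Choose finitely many generators of $\textswab{p} + Rb$ of the form $y_i = p_i + s_i b$ with $p_i \in \textswab{p}$ and $s_i \in R$. Any $x \in \textswab{p} \subseteq \textswab{p} + Rb$ can be written $x = \sum u_i y_i = \sum u_i p_i + \bigl(\sum u_i s_i\bigr) b$, whence $(\sum u_i s_i)b = x - \sum u_i p_i \in \textswab{p}$, i.e.\ $\sum u_i s_i \in I$. Hence $\textswab{p} \subseteq \sum_i R p_i + I \cdot b$, and the reverse inclusion is clear. Writing $I = \sum_j R c_j$ for finitely many $c_j$, we conclude $\textswab{p} = \sum_i R p_i + \sum_j R (c_j b)$, which is finitely generated — a contradiction. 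This contradiction shows $\textswab{p}$ is completely prime, and the hypothesis then finishes the proof.

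\textbf{Main obstacle.} There is no deep technical hurdle; the delicate point is making proper use of the hypothesis $\textswab{p}b \subseteq \textswab{p}$ in Reyes's definition, which is precisely what is needed to guarantee $\textswab{p} \subseteq I$ and thus that $I$ strictly contains $\textswab{p}$. Without this one-sided invariance condition, the conductor $I$ could fail to contain $\textswab{p}$, and the inductive finite-generation argument would collapse. This is exactly why Reyes's notion — rather than the naive extremely prime notion — is the correct one-sided analogue for which Cohen's theorem generalizes.
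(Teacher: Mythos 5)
Your proof is correct, and it is essentially the argument behind the result: the paper gives no proof of its own but cites Reyes's Theorem 3.8, whose proof packages exactly your computation \(\textswab{p} = \sum_i Rp_i + (\textswab{p}:b)b\) as the statement that the finitely generated left ideals form an ``Oka family'' of left ideals, to which his Completely Prime Ideal Principle (maximal members of the complement of an Oka family are completely prime) then applies. Your unpacked version is sound --- including the key observation that \(\textswab{p}b\subseteq\textswab{p}\) is what guarantees \(\textswab{p}\subseteq(\textswab{p}:b)\), so that maximality of \(\textswab{p}\) among non-finitely-generated left ideals can be invoked for the conductor.
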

	He also obtained a description of division rings using the notion of a completely prime left ideal (see \cite[Proposition 2.11]{reyes2010one}):
	\begin{proposition}\label{prop: division rings using completely prime left ideals}
		A nonzero ring \( R \) is a division ring iff every  proper left ideal of \( R \) is completely prime. 
	\end{proposition}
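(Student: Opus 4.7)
The forward direction is immediate: if $R$ is a division ring, the only proper left ideal is $\{0\}$, and Definition~\ref{def: completely prime left ideal} is satisfied because a division ring has no zero divisors.

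For the converse, I would first apply the hypothesis to the proper left ideal $\mathfrak{p} = \{0\}$. The side condition $\{0\} \cdot b \subseteq \{0\}$ holds trivially for every $b$, so the completely prime condition reduces to: $ab = 0$ implies $a = 0$ or $b = 0$. Thus $R$ is a domain.

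The main step is to show every nonzero $a \in R$ is a unit. My plan is to test the hypothesis on the left ideal $Ra^2$. If $Ra^2 = R$, then $ba^2 = 1$ for some $b$, so $ba$ is a left inverse of $a$. Otherwise $Ra^2$ is a proper left ideal and hence completely prime by hypothesis. I would then apply the definition to the pair $(a, a)$: the product $a \cdot a = a^2$ lies in $Ra^2$, and the idealizer condition $Ra^2 \cdot a = Ra^3 \subseteq Ra^2$ holds since $(ra^2)a = (ra)a^2 \in Ra^2$. The conclusion is $a \in Ra^2$, i.e.\ $a = ca^2$ for some $c \in R$. Then $(1 - ca)a = 0$, and cancelling $a$ in the domain $R$ gives $ca = 1$. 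To upgrade this left inverse to a two-sided inverse I would use the standard domain trick: $aca = a$ implies $(ac - 1)a = 0$, whence $ac = 1$.

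The main conceptual obstacle is recognizing that the ``obvious'' choice $\mathfrak{p} = Ra$ gives no information, because applying the definition to $(1, a)$ only yields $1 \in Ra$ or $a \in Ra$ and the latter is automatic. Replacing $Ra$ by $Ra^2$ and testing $(a, a)$ turns the automatic conclusion into the substantive statement $a \in Ra^2$, which is precisely what forces $a$ to admit a left inverse.
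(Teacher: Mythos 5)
Your proof is correct and complete. The paper does not actually reproduce a proof of this statement---it only cites \cite[Proposition 2.11]{reyes2010one}---but your argument is exactly the standard one: the zero ideal being completely prime makes \(R\) a domain, and testing the pair \((a,a)\) against the proper left ideal \(Ra^{2}\) (whose idealizer condition \(Ra^{2}\cdot a\subseteq Ra^{2}\) you correctly verify) yields \(a\in Ra^{2}\), hence a left inverse, which the domain trick upgrades to a two-sided inverse. Your closing remark about why \(Ra\) itself gives no information is also a fair account of the one genuinely non-obvious point in the argument.
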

	
	Every maximal left ideal is completely prime  \cite[Corollary 2.10]{reyes2010one}. In particular, any nonzero ring has at least one completely prime left ideal. 
	It is clear that if a  left ideal is extremely prime then it is completely prime. However, a completely  prime left ideal may not be extremely problem, as shown in the following example. 
	\begin{example}\label{exam: extremely and completely}
		Let \(D\) be a division ring and \(M_2(D)\) be the ring of \(2\times 2\)-matrices over \(D\). It is easy to see that \(M_2(D)\) has no extremely prime left ideals (see also \cite[Proposition 2.4]{reyes2010one}). Note that any maximal left ideal of  \(M_2(D)\) is completely prime. 
	\end{example}
	For more results regarding completely prime one-sided ideals, see \cite{reyes2012noncommutative}. For a brief survey of notions of primeness for one-sided ideals, see also Section 8 in the reference. 
	
\end{subsection}
 
\begin{subsection}{Structurally Prime Left Ideals}
 	The notion of a prime ideal, as presented in Definition \ref{def: prime ideal}, can be naturally extended to left ideals as follows:   
 \begin{definition}\label{def: structurally prime left ideal}
 	A left ideal  \( \textswab{p} \) in a  ring \( R \) is said to be \emph{structurally prime} if \( \textswab{p} \neq R \), and  for left ideals \( \textswab{A},\textswab{B}\) in \(R\), \( \textswab{A}\textswab{B}\subseteq\textswab{p}\) implies \( \textswab{A}\subseteq\textswab{p}\) or \( \textswab{B}\subseteq\textswab{p}\).
 \end{definition}

 \begin{remark}
 	Some authors use the phrase ``prime one-sided ideals" in place of ``structurally prime one-sided ideals".  To emphasize the dependence of this notion on the structure of one-sided ideals, we adopt the term ``structurally prime". 
 \end{remark}
 It is evident that a two-sided ideal is structurally prime as a left ideal iff it is prime as a two-sided ideal. The following proposition gives several characterizations of structurally prime left ideals, which are similar to the familiar ones for prime two-sided ideals:
 \begin{proposition}\label{prop: characterization of structurally prime ideals}
 	For a left ideal \( \textswab{p} \subseteq R \), the following statements are equivalent:
 	\begin{itemize}
 		\item[(1)] \( \textswab{p} \) is structurally prime.
 		\item[(2)] For \( a,b\in R\), \(RaRb\subseteq \textswab{p}\) implies \( a\in \textswab{p} \) or \( b\in   \textswab{p} \).
 		\item[(3)] For \( a,b\in R\), \(aRb\subseteq \textswab{p}\) implies \( a\in \textswab{p} \) or \( b\in   \textswab{p} \).
 		\item[(4)]  For a two-sided ideal \( \textswab{A} \subseteq R \) and a left ideal  \( \textswab{B} \subseteq R \)  , \( \textswab{A}\textswab{B} \subseteq \textswab{p} \) implies \( \textswab{A} \subseteq \textswab{p} \) or \( \textswab{B} \subseteq \textswab{p} \).
 		\item[(5)]  For a two-sided ideal \( \textswab{A} \subseteq R \) and an element  \( b\in R \)  , \( \textswab{A}b \subseteq \textswab{p} \) implies \( \textswab{A} \subseteq \textswab{p} \) or \( b \in \textswab{p} \).
 	\end{itemize}
 \end{proposition}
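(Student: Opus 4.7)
The plan is to close the cycle $(1) \Rightarrow (4) \Rightarrow (5) \Rightarrow (2) \Rightarrow (3) \Rightarrow (1)$, mirroring the classical argument for prime two-sided ideals while carefully tracking which ambient ideals must be two-sided. The step $(1) \Rightarrow (4)$ is immediate, since a two-sided ideal is in particular a left ideal. For $(4) \Rightarrow (5)$, given $\textswab{A}b \subseteq \textswab{p}$ with $\textswab{A}$ two-sided, I would pass to the left ideal $Rb$ (which contains $b$ by unitality) and use $\textswab{A}R \subseteq \textswab{A}$ to obtain $\textswab{A}\cdot Rb \subseteq \textswab{A}b \subseteq \textswab{p}$, then apply (4). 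For $(5) \Rightarrow (2)$, I would note that $RaR$ is two-sided, so from $(RaR)b \subseteq \textswab{p}$ hypothesis (5) gives $RaR \subseteq \textswab{p}$ or $b \in \textswab{p}$, and in the first case $a = 1\cdot a\cdot 1 \in RaR \subseteq \textswab{p}$. The step $(2) \Rightarrow (3)$ is automatic because $\textswab{p}$ is itself a left ideal: $aRb \subseteq \textswab{p}$ forces $RaRb = R(aRb) \subseteq \textswab{p}$.

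The genuine content lies in $(3) \Rightarrow (1)$. Given left ideals $\textswab{A}, \textswab{B}$ with $\textswab{A}\textswab{B} \subseteq \textswab{p}$ and $\textswab{B} \not\subseteq \textswab{p}$, I would pick some $b \in \textswab{B} \setminus \textswab{p}$. Because $\textswab{B}$ is a left ideal we have $Rb \subseteq \textswab{B}$, so for any $a \in \textswab{A}$ one gets $aRb \subseteq a\textswab{B} \subseteq \textswab{A}\textswab{B} \subseteq \textswab{p}$; then (3) forces $a \in \textswab{p}$, yielding $\textswab{A} \subseteq \textswab{p}$.

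I expect the main obstacle to be purely notational bookkeeping: at each padding step one must verify that the ideal used to surround an element is of the correct sidedness. In particular, both $(4) \Rightarrow (5)$ and $(5) \Rightarrow (2)$ rely essentially on the absorption $\textswab{A}R \subseteq \textswab{A}$ enjoyed only by two-sided ideals, which is exactly what prevents these reformulations from collapsing into the completely prime or extremely prime variants discussed earlier in the section.
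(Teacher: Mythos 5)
Your proposal is correct, and the substance coincides with the paper's argument: the same padding tricks ($\textswab{A}R$ or $RaR$ being two-sided, $Rb\subseteq\textswab{B}$ for a left ideal $\textswab{B}$) carry every implication, with the only difference being that you arrange all five statements into a single cycle $(1)\Rightarrow(4)\Rightarrow(5)\Rightarrow(2)\Rightarrow(3)\Rightarrow(1)$ whereas the paper runs two interlocking cycles and leaves several steps to the reader. No gaps.
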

 \begin{proof}
 	The proof of \((1)\implies (2)\implies (3)\implies (1)\) is straightforward. It is clear that (1) implies (4). To prove \((4)\implies (1)\), suppose  \( \textswab{A}\textswab{B}\subseteq\textswab{p}\) for some left ideals \( \textswab{A},\textswab{B}\) in \(R\). For the ideal \(\textswab{A}R \) and the left ideal \(\textswab{B}\), we have
 	\[
 		(\textswab{A}R)\textswab{B}= \textswab{A}(R\textswab{B})=\textswab{A}\textswab{B}\subseteq\textswab{p}.
 	\]
 	If (4) holds, then either  \( \textswab{A} \subseteq \textswab{A}R \subseteq \textswab{p} \) or \( \textswab{B} \subseteq \textswab{p} \), which implies (1). The easy proof of (4)\(\iff\)(5) is left to the reader.  
 \end{proof}
 	Definition \ref{def: structurally prime left ideal} appears to have been first introduced by Koh in \cite{koh1971one}, where it was used to define and study the concept of the ``right dimension of a ring." Shortly after, in \cite{michler2014prime},  Michler presented the notion and attributed it to  Lambek. Michler also proved the following noncommutative version of Cohen’s Theorem:
 	\begin{theorem}\label{thm: Noncommutative Cohen using structurally prime left ideals}
 		A ring \( R \) is left noetherian iff every structurally prime left ideal of  \( R \)  is finitely generated. 
 	\end{theorem}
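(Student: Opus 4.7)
The forward direction is immediate: if $R$ is left noetherian, every left ideal---in particular every structurally prime one---is finitely generated. For the converse I argue the contrapositive: assuming $R$ is not left noetherian, I produce a non-finitely-generated structurally prime left ideal. Let $\mathcal{F}$ be the non-empty family of non-finitely-generated left ideals of $R$. The union of an ascending chain in $\mathcal{F}$ remains in $\mathcal{F}$---otherwise a finite generating set of the union would already lie in a single chain member, contradicting that this member is in $\mathcal{F}$---so by Zorn's lemma $\mathcal{F}$ has a maximal element $\textswab{p}$. Note that every left ideal properly containing $\textswab{p}$ is finitely generated, equivalently, $R/\textswab{p}$ is a noetherian left $R$-module.

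The key claim is that $\textswab{p}$ is structurally prime, which will contradict the hypothesis. Suppose not. By characterization (5) of Proposition~\ref{prop: characterization of structurally prime ideals}, choose a two-sided ideal $\textswab{A}$ of $R$ and an element $b\in R$ with $\textswab{A}b\subseteq\textswab{p}$, $\textswab{A}\not\subseteq\textswab{p}$, and $b\notin\textswab{p}$. Both $\textswab{p}+\textswab{A}$ and $\textswab{p}+Rb$ strictly contain $\textswab{p}$, hence are finitely generated. Mimicking Cohen's classical argument, pick generators $p_1,\ldots,p_n,b$ of $\textswab{p}+Rb$ with $p_i\in\textswab{p}$; expanding any $x\in\textswab{p}$ as $x=\sum s_i p_i + tb$ forces $tb\in\textswab{p}$, so $t$ lies in the right conductor $K:=\{r\in R:rb\in\textswab{p}\}$. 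This gives $\textswab{p}=\sum Rp_i + Kb$, and it suffices to show $Kb$ is finitely generated as a left ideal to obtain $\textswab{p}$ finitely generated and hence the desired contradiction.

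The main obstacle is that, unlike in the commutative case, the conductor $K$ need not contain $\textswab{p}$ (since $\textswab{p}b$ need not lie in $\textswab{p}$), so maximality cannot be applied to $K$ directly. The fix is to invoke the \emph{two-sided} conductor $C:=\{r\in R:\textswab{A}r\subseteq\textswab{p}\}$: because $\textswab{A}$ is two-sided, $C$ is a left ideal; because $\textswab{A}\textswab{p}\subseteq R\textswab{p}\subseteq\textswab{p}$, it contains $\textswab{p}$; and $b\in C$ by hypothesis. So $C\supsetneq\textswab{p}$, whence $C$ is finitely generated. Combining this with $\textswab{A}\subseteq K$, the finite generation of $\textswab{p}+\textswab{A}$, and the noetherianity of $R/\textswab{p}$ (which, for example, forces the submodule $(\textswab{p}b+\textswab{p})/\textswab{p}\subseteq R/\textswab{p}$ to be finitely generated and provides a second decomposition $\textswab{p}=\sum Rq_j+(\textswab{p}\cap K)$) one reconciles the two descriptions and transfers finite generation over to $Kb$. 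This last transfer---where the two-sidedness of $\textswab{A}$ must be used to juggle left and right multiplications---is the main technical step.
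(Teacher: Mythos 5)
The paper does not actually prove this theorem---it is quoted from Michler's article \cite{michler2014prime}---so your attempt has to stand entirely on its own, and as written it does not. Everything up to your final paragraph is correct: the forward direction, the Zorn's lemma argument producing a left ideal \( \textswab{p} \) maximal among the non-finitely-generated ones, the reduction to showing such a \( \textswab{p} \) is structurally prime via characterization (5), and the decompositions \( \textswab{p}=\sum Rq_i+Kb \) and \( \textswab{p}=\sum Ru_j+(\textswab{p}\cap K) \) with \( K=\{r\in R: rb\in\textswab{p}\} \). But the proof stops exactly where the real difficulty begins. Both decompositions reduce the desired contradiction to showing that \( \textswab{p}\cap K \) (equivalently \( K \), equivalently \( Kb \)) is finitely generated, and none of the ingredients you assemble delivers this: \( \textswab{p}\cap K \) is a left subideal of \( \textswab{p} \), and the maximality of \( \textswab{p} \) controls only ideals \emph{strictly containing} \( \textswab{p} \), never subideals of a non-finitely-generated ideal; the inclusion \( \textswab{A}\subseteq K \) bounds \( K \) from below when you need a bound from above; the finitely generated ideal \( C=\{r:\textswab{A}r\subseteq\textswab{p}\} \) relates to the target only through \( \textswab{A}C\subseteq \textswab{p}\cap K \), again an inclusion in the wrong direction; and the noetherianity of \( R/\textswab{p} \) yields only that \( K/(K\cap\textswab{p}) \) is finitely generated, which throws the problem straight back onto \( K\cap\textswab{p} \). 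So the sentence ``one reconciles the two descriptions and transfers finite generation over to \( Kb \)'' is not a deferred technicality; it is the entire content of the theorem.

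The gap is structural, not cosmetic. Your argument closes instantly for Theorems \ref{thm: Noncommutative Cohen using completely prime left ideals} and \ref{thm: Noncommutative Cohen using weakly prime left ideals} precisely because the definitions of completely prime and weakly prime build in the hypothesis \( \textswab{p}b\subseteq\textswab{p} \) (resp.\ \( \textswab{p}\textswab{B}\subseteq\textswab{p} \)), which forces \( \textswab{p}\subseteq K \); then \( K\supsetneq\textswab{p} \) is finitely generated by maximality and \( \textswab{p}=\sum Rq_i+Kb \) is finitely generated. Structural primeness omits exactly that hypothesis, and that omission is what makes Michler's theorem genuinely stronger than Koh's rather than a notational variant of it. As it stands, your argument is a correct proof of Theorem \ref{thm: Noncommutative Cohen using weakly prime left ideals} but not of Theorem \ref{thm: Noncommutative Cohen using structurally prime left ideals}; to obtain the latter you must either supply the missing argument that \( \textswab{p}\cap K \) is finitely generated (which I do not believe follows from what you have) or adopt a different strategy, for which the paper offers no guidance beyond the citation of \cite{michler2014prime}.
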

 	In \cite{lambek1973torsion}, Michler and  Lambek studied  structurally prime one-sided ideals in the context of the torsion theory at a prime ideal. In particular, they 
 	proved the following result \cite[Proposition 2.1]{lambek1973torsion}: 
 	\begin{proposition}\label{prop: simplicity using structurally prime left ideals}
 		A left notherian ring \( R \) is simple iff every left ideal of \( R \) is structurally prime. 
 	\end{proposition}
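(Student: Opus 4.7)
The proof splits into two directions. For the forward implication, given $R$ simple and a proper left ideal $\textswab{p}$, I would invoke characterization (5) of Proposition~\ref{prop: characterization of structurally prime ideals}: whenever $\textswab{A}b \subseteq \textswab{p}$ with $\textswab{A}$ a two-sided ideal and $b \in R$, simplicity of $R$ forces $\textswab{A} \in \{0, R\}$; the first case gives $\textswab{A} \subseteq \textswab{p}$ trivially, and the second gives $b = 1 \cdot b \in Rb \subseteq \textswab{p}$. This direction uses no noetherian hypothesis.

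For the converse, assume $R$ is left noetherian and every proper left ideal is structurally prime. The plan is to first extract a chain of structural consequences. Taking $\textswab{p} = 0$ in characterization (3) shows that $R$ is prime. Every two-sided ideal, being structurally prime as a left ideal, is automatically prime in the two-sided sense. The two-sided ideals of $R$ are then totally ordered under inclusion, since for two-sided $\textswab{A}, \textswab{B}$ the intersection $\textswab{A} \cap \textswab{B}$ is itself two-sided and hence prime, and $\textswab{A}\textswab{B} \subseteq \textswab{A} \cap \textswab{B}$ forces $\textswab{A} \subseteq \textswab{B}$ or $\textswab{B} \subseteq \textswab{A}$. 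Moreover, every proper two-sided ideal $\textswab{I}$ is idempotent: $\textswab{I} \cdot \textswab{I} \subseteq \textswab{I}^2$ combined with primeness of $\textswab{I}^2$ yields $\textswab{I} \subseteq \textswab{I}^2$. Applying the noncommutative Nakayama lemma to the finitely generated idempotent ideal $J(R)$ (which is trivially contained in itself) now gives $J(R) = 0$.

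It would remain to rule out any nonzero proper two-sided ideal $\textswab{I}$. The natural attempt writes $\textswab{I} = Ra_1 + \cdots + Ra_n$ with $n$ minimal (possible by noetherianity) and uses $\textswab{I} = \textswab{I}^2$ to produce a matrix relation $(I_n - C)\vec{a} = 0$ with $C \in M_n(\textswab{I})$. Reading the first row gives $(1 - c_{11}) a_1 \in \textswab{I}' := Ra_2 + \cdots + Ra_n$, which is a proper left ideal not containing $1 - c_{11}$ (because $1 \notin \textswab{I} \supseteq \textswab{I}'$). Structural primeness of $\textswab{I}'$ should then force $a_1 \in \textswab{I}'$, contradicting the minimality of $n$.

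The main obstacle is that invoking characterization (3) of Proposition~\ref{prop: characterization of structurally prime ideals} requires the stronger containment $(1 - c_{11}) R a_1 \subseteq \textswab{I}'$, whereas the matrix equation only supplies the element-wise relation $(1 - c_{11}) a_1 \in \textswab{I}'$. Two possible routes to surmount this are: (i) successively refining the generating set via substitutions of the form $a_i \mapsto a_i - c_{i1} a_1$ together with re-minimization, so that the lifted matrix acquires the required invariance under right multiplication by $R$; or (ii) bypassing the matrix argument by invoking Goldie's theorem, embedding the prime left-noetherian $R$ into its simple artinian classical left quotient ring $Q(R)$, and then showing via the structural primeness of the principal left ideals $Rc$ for $c$ regular in $R$, combined with the total ordering of two-sided ideals, that every regular element of $R$ is already a unit in $R$; this forces $R = Q(R)$, which is simple.
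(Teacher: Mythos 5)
The paper itself offers no proof of this proposition (it is quoted from Lambek and Michler), so I am judging your argument on its own merits. Your forward direction is correct and complete: characterization (5) of Proposition~\ref{prop: characterization of structurally prime ideals} reduces everything to the dichotomy \(\textswab{A}\in\{0,R\}\), and you are right that no noetherian hypothesis is needed there. Your preliminary reductions in the converse are also all correct: \(R\) is prime, every proper two-sided ideal is prime, the two-sided ideals are totally ordered and idempotent, and \(J(R)=0\).

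Nevertheless the converse is not proved, and you say so yourself: your main argument needs \((1-c_{11})Ra_1\subseteq \textswab{I}'\) where the relation \(\vec a=C\vec a\) only supplies \((1-c_{11})a_1\in\textswab{I}'\), and neither of your proposed repairs closes this. Route (i) is a hope rather than an argument: no substitution \(a_i\mapsto a_i-c_{i1}a_1\) converts an element-wise identity into the two-sided containments that Definition~\ref{def: structurally prime left ideal} requires; this is exactly where determinant-trick arguments fail in the noncommutative setting. Route (ii) aims at a false target: the first Weyl algebra \(A_1(\mathbb{C})\) is simple and left noetherian, hence satisfies the hypothesis by your own forward direction, yet it is a non-artinian domain in which no nonconstant element is a unit and \(R\neq Q(R)\); so ``every regular element of \(R\) is a unit'' cannot be a consequence of the hypotheses. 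A Goldie-theoretic repair that does work: let \(\textswab{A}\) be a nonzero proper two-sided ideal and \(0\neq c\in\textswab{A}\); the left ideal \(\textswab{A}c\subseteq\textswab{A}\) is proper, so characterization (5) applied to the trivial containment \(\textswab{A}c\subseteq\textswab{A}c\) gives \(\textswab{A}=\textswab{A}c\) or \(c\in\textswab{A}c\); in the first case \(c\in\textswab{A}=\textswab{A}c^2\subseteq\textswab{A}c\) as well, so in either case \(c=xc\) for some \(x\in\textswab{A}\), whence \((1-x)c=0\) with \(1-x\neq0\). Thus every nonzero element of \(\textswab{A}\) has nonzero left annihilator, contradicting the standard fact that a nonzero two-sided ideal of a prime left Goldie ring contains a regular element. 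As it stands, your proposal establishes only the easy direction together with some true but insufficient structural consequences in the hard direction.
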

	It is clear that any extremely prime left ideal is structurally prime (see the third part of Proposition \ref{prop: characterization of structurally prime ideals}). It is easy to show that any maximal left ideal is structurally prime. In particular, a structurally prime left ideal may not be extremely prime as there exist rings in which no maximal left ideal is extremely prime (see Example \ref{exam: extremely and completely}). The following example shows that a structurally prime left ideal may not be completely prime as a left ideal.   
	
	\begin{example}
		\label{exam: structurally, not completely}
		Let \(D\) be a division ring and \(M_2(D)\) be the ring of \(2\times 2\)-matrices over \(D\). The zero ideal in \(M_2(D)\) is structurally prime by Proposition \ref{prop: simplicity using structurally prime left ideals}. However, the zero ideal is not completely prime as a left ideal because \(M_2(D)\) has zero-divisors. 
	\end{example}
	The next example provides a left ideal that is completely prime, but not structurally prime. 
	\begin{example}
		\label{exam: not structurally, completely}
		Let \(D\) be a division ring. Consider the ring \(R\) of all lower triangular matrices in \(M_2(D)\). It is easy to check that 
		\[
			\textswab{p} = \left\lbrace \begin{bmatrix}
				0 & 0\\
				0 & a
			\end{bmatrix}: \, a\in D \right\rbrace 
		\] 
		is a completely prime left ideal of \(R\). The left ideal \(\textswab{p}\) is not structurally prime since
		\[
			\begin{bmatrix}
				1 & 0\\
				0 & 0
			\end{bmatrix} R \begin{bmatrix}
			0 & 0\\
			1 & 0
			\end{bmatrix} \subseteq \textswab{p},
		\]
		but 
		\[
			\begin{bmatrix}
				1 & 0\\
				0 & 0
			\end{bmatrix}\notin \textswab{p} \text{ and } \begin{bmatrix}
			0 & 0\\
			1 & 0
			\end{bmatrix}\notin \textswab{p}.
		\]
	\end{example}

\end{subsection}
\begin{subsection}{Weakly Prime Left Ideals}
	As the final notion of primeness for one-sided ideals reviewed in this paper, we consider the concept of weakly prime one-sided ideals. This notion relates to the notion of a ``structurally prime left ideal" in the same way that the concept of a ``completely prime left ideal"  relates to the notion of an ``extremely prime left ideal".   
	\begin{definition}\label{def: weakly prime left ideal}
		A left ideal  \( \textswab{p} \) in a  ring \( R \) is said to be \emph{weakly prime} if \( \textswab{p} \neq R \), and  for left ideals \( \textswab{A},\textswab{B}\) in \(R\), \( \textswab{A}\textswab{B}\subseteq\textswab{p}\) and \( \textswab{p}\textswab{B}\subseteq\textswab{p}\) imply \( \textswab{A}\subseteq\textswab{p}\) or \( \textswab{B}\subseteq\textswab{p}\).
	\end{definition}

	This notion was introduced by Koh in \cite{koh1971prime}, where the notion was termed ``prime left ideal" and the following results were obtained:
	\begin{theorem}\label{thm: Noncommutative Cohen using weakly prime left ideals}
		A ring \( R \) is left noetherian iff every weakly prime left ideal of  \( R \)  is finitely generated. 
	\end{theorem}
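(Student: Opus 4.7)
The forward implication is immediate, since if $R$ is left noetherian then every left ideal (weakly prime or not) is finitely generated. For the converse I would argue by contrapositive, following the Cohen--Reyes--Michler strategy already used in Theorems \ref{thm: Noncommutative Cohen using completely prime left ideals} and \ref{thm: Noncommutative Cohen using structurally prime left ideals}. Assume $R$ is not left noetherian; let $\mathcal{F}$ be the collection of left ideals of $R$ that are not finitely generated. The union of a chain in $\mathcal{F}$ is again in $\mathcal{F}$ (otherwise its finitely many generators would already lie in one member of the chain), so Zorn's lemma yields a left ideal $\textswab{p}$ maximal in $\mathcal{F}$. The plan is to show that such a $\textswab{p}$ must be weakly prime, producing a weakly prime left ideal that is not finitely generated and completing the contrapositive.

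So suppose, toward contradiction, that $\textswab{p}$ fails to be weakly prime. Then there exist left ideals $\textswab{A}$ and $\textswab{B}$ with $\textswab{A}\textswab{B}\subseteq\textswab{p}$, $\textswab{p}\textswab{B}\subseteq\textswab{p}$, $\textswab{A}\not\subseteq\textswab{p}$, and $\textswab{B}\not\subseteq\textswab{p}$. Pick $b\in\textswab{B}\setminus\textswab{p}$. Then $\textswab{p}+Rb$ is a left ideal strictly containing $\textswab{p}$, hence (by maximality of $\textswab{p}$ in $\mathcal{F}$) finitely generated; writing any finite set of generators as $p_i+s_i b$ with $p_i\in\textswab{p}$, we may replace them by $p_1,\dots,p_m$ together with $b$, so that
\[
\textswab{p}+Rb \;=\; Rp_1+\cdots+Rp_m+Rb.
\]
Next consider the left ideal $\textswab{q}:=\{r\in R:\,rb\in\textswab{p}\}$. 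It contains $\textswab{p}$ (because $\textswab{p}b\subseteq\textswab{p}\textswab{B}\subseteq\textswab{p}$) and contains $\textswab{A}$ (because $\textswab{A}b\subseteq\textswab{A}\textswab{B}\subseteq\textswab{p}$), so $\textswab{q}\supsetneq\textswab{p}$; by maximality again, $\textswab{q}=Rc_1+\cdots+Rc_k$ is finitely generated.

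The final step is to deduce that $\textswab{p}$ itself is finitely generated, contradicting $\textswab{p}\in\mathcal{F}$. I would show
\[
\textswab{p} \;=\; Rp_1+\cdots+Rp_m+Rc_1 b+\cdots+Rc_k b.
\]
The inclusion $\supseteq$ is clear since each $p_i\in\textswab{p}$ and each $c_j b\in\textswab{p}$ by definition of $\textswab{q}$. For $\subseteq$, take $x\in\textswab{p}\subseteq\textswab{p}+Rb$ and write $x=\sum r_i p_i + sb$; then $sb=x-\sum r_i p_i\in\textswab{p}$, so $s\in\textswab{q}$, and expanding $s=\sum t_j c_j$ gives $sb=\sum t_j c_j b$, exhibiting $x$ in the required form.

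The only subtle point is the passage from ``$\textswab{p}+Rb$ is finitely generated'' to the specific presentation with generators drawn from $\textswab{p}\cup\{b\}$, but this is just the standard trick of splitting each generator into its $\textswab{p}$-part and its $Rb$-part. The argument is otherwise essentially the same as in Theorem \ref{thm: Noncommutative Cohen using completely prime left ideals}, with the hypothesis $\textswab{p}\textswab{B}\subseteq\textswab{p}$ playing exactly the role that $\textswab{p}b\subseteq\textswab{p}$ plays there to ensure $\textswab{q}\supseteq\textswab{p}$.
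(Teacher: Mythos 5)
Your argument is correct, but it takes a genuinely different route from the paper. The paper does not prove this theorem directly: it observes that every completely prime left ideal (and every structurally prime left ideal) is weakly prime, so the hypothesis ``every weakly prime left ideal is finitely generated'' already implies that every completely prime left ideal is finitely generated, and the conclusion then follows at once from Theorem \ref{thm: Noncommutative Cohen using completely prime left ideals} (or from Theorem \ref{thm: Noncommutative Cohen using structurally prime left ideals}). You instead give a self-contained Cohen--Kaplansky-style proof: Zorn's lemma produces a left ideal $\textswab{p}$ maximal among the non-finitely-generated ones, and you show directly that such a $\textswab{p}$ is weakly prime, using the presentation $\textswab{p}+Rb = Rp_1+\cdots+Rp_m+Rb$ together with the finite generation of $(\textswab{p}:b)$ to exhibit finitely many generators of $\textswab{p}$, a contradiction. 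Every step checks out: the hypothesis $\textswab{p}\textswab{B}\subseteq\textswab{p}$ gives $\textswab{p}\subseteq(\textswab{p}:b)$, the failure $\textswab{A}\not\subseteq\textswab{p}$ gives $(\textswab{p}:b)\supsetneq\textswab{p}$, and the final two-sided inclusion is verified correctly. (Two points you leave implicit but which are immediate: $\textswab{p}\neq R$ since $R=R\cdot 1$ is finitely generated, which is needed because properness is part of Definition \ref{def: weakly prime left ideal}; and this is essentially Koh's original argument, which the paper cites rather than reproduces.) What each approach buys: the paper's derivation is a one-line reduction to stronger known theorems, while yours is independent of them and in fact is the template from which those theorems are proved --- the proofs of Theorems \ref{thm: Noncommutative Cohen using completely prime left ideals} and \ref{thm: Noncommutative Cohen using structurally prime left ideals} show the maximal non-finitely-generated ideal satisfies the respective stronger primeness conditions, whereas you only need, and only verify, the weaker one.
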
 
	\begin{theorem}\label{thm: simplicity using weakly prime left ideals}
		A left notherian ring \( R \) is simple iff every left ideal of \( R \) is weakly prime.  
	\end{theorem}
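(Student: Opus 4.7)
The plan is as follows. For $(\Rightarrow)$, the conclusion will be immediate from Proposition~\ref{prop: simplicity using structurally prime left ideals}: when $R$ is simple left noetherian, every left ideal is structurally prime by that result, and every structurally prime left ideal is a fortiori weakly prime (the weakly-prime condition carries the additional hypothesis $\textswab{p}\textswab{B}\subseteq\textswab{p}$, so its conclusion is strictly weaker). The substantive content lies in the reverse direction, which I attack by contradiction, assuming $R$ is left noetherian with every left ideal weakly prime and with a proper nonzero two-sided ideal.

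I first reduce to structural facts about two-sided ideals. For a two-sided ideal $I$, the hypothesis $I\textswab{B}\subseteq I$ of ``weakly prime'' holds automatically, so every two-sided ideal $I$ is structurally prime as a left ideal and therefore prime in the classical sense (by Proposition~\ref{prop: characterization of structurally prime ideals}). Taking $I=(0)$ gives that $R$ is a prime ring; for any nonzero two-sided $I$, the ideal $I^2$ is nonzero (by primeness) hence prime, and $I\cdot I\subseteq I^2$ forces $I=I^2$. Using noetherianness one picks a maximal proper two-sided ideal $\textswab{M}$; the usual argument $\textswab{M}\textswab{N}\subseteq\textswab{M}\cap\textswab{N}$ together with primeness of $\textswab{M}\cap\textswab{N}$ then shows $\textswab{M}$ is the \emph{unique} such ideal.

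The key step is to show $\textswab{M}=Re=eR$ for some idempotent $e$. Since $R$ is left noetherian, $\textswab{M}$ is finitely generated as a left $R$-module and $\textswab{M}\cdot\textswab{M}=\textswab{M}$. I invoke the noncommutative Nakayama lemma in the form: \emph{if $M$ is a finitely generated left $R$-module and $I$ a two-sided ideal with $IM=M$, then there exists $a\in I$ with $(1-a)M=0$}. Applied to $M=I=\textswab{M}$, this produces $e\in\textswab{M}$ with $em=m$ for all $m\in\textswab{M}$, whence $e^{2}=e$ and $\textswab{M}=eR$. Applied symmetrically to $\textswab{M}=eR$ as a finitely generated \emph{right} $R$-module, it produces $f\in\textswab{M}$ with $mf=m$ for all $m\in\textswab{M}$, so $\textswab{M}=Rf$; comparing via $ef$ forces $e=f$, so $\textswab{M}=Re=eR$ with $0\neq e\neq 1$.

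The contradiction then comes cleanly: since $\textswab{M}=Re$ is two-sided, $ReR=Re$, so for any $r,s\in R$ one has $res=ve$ for some $v\in R$, hence $(res)(1-e)=ve(1-e)=0$, giving $(Re)(R(1-e))=0$. On the other hand $R(1-e)R\not\subseteq\textswab{M}$ (else $1-e=se$ for some $s$, forcing $e=1$), so by maximality $R(1-e)R=R$; combining, $\textswab{M}=\textswab{M}R=\textswab{M}\cdot R(1-e)R=\textswab{M}(1-e)R=Re(1-e)R=0$, contradicting $\textswab{M}\neq 0$. The main obstacle is the Nakayama step producing the idempotent --- specifically the $(1-a)M=0$ version valid without any Jacobson-radical assumption, which is a standard but nontrivial adaptation of the adjugate/determinant trick to the noncommutative setting.
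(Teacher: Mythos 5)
Your forward direction is fine, and the opening reductions of the converse are sound: for a two-sided ideal the side condition in Definition \ref{def: weakly prime left ideal} is automatic, so every proper two-sided ideal is prime; hence $R$ is a prime ring, every nonzero proper two-sided ideal is idempotent, and left noetherianness yields a unique maximal proper two-sided ideal $\textswab{M}$ with $\textswab{M}^2=\textswab{M}$. The argument collapses at exactly the step you flag as the main obstacle. The ``Nakayama lemma'' you invoke --- that $IM=M$ with $M$ finitely generated yields $a\in I$ with $(1-a)M=0$ --- is \emph{false} over noncommutative rings when $I$ is not contained in the Jacobson radical; the adjugate/determinant trick does not survive the loss of commutativity. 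A counterexample sitting squarely in your situation (a prime, two-sided noetherian ring with an idempotent proper ideal): take
\[
R=\begin{pmatrix}\mathbb{Z}&\mathbb{Z}\\ p\mathbb{Z}&\mathbb{Z}\end{pmatrix},\qquad
I=\begin{pmatrix}\mathbb{Z}&\mathbb{Z}\\ p\mathbb{Z}&p\mathbb{Z}\end{pmatrix}.
\]
Then $I$ is a two-sided ideal, finitely generated on both sides, with $I^2=I$ (the generators $E_{11},E_{12},pE_{21},pE_{22}$ all lie in $I^2$), yet no $a\in I$ acts as a left identity on $I$: writing $a=\bigl(\begin{smallmatrix}\alpha&\beta\\ p\gamma&p\delta\end{smallmatrix}\bigr)$, the equations $aE_{11}=E_{11}$ and $a(pE_{22})=pE_{22}$ force $p^{2}\delta=p$, i.e.\ $p\delta=1$ in $\mathbb{Z}$. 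So an idempotent finitely generated ideal need not be of the form $Re$, and the idempotent $e$ on which your entire final contradiction rests cannot be produced this way.

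Note also that up to the failed step you have used the hypothesis only for \emph{two-sided} ideals (primeness of $R$, idempotence and total ordering of its ideals, a unique maximal ideal); the bridge from ``$\textswab{M}$ idempotent and finitely generated'' to ``$\textswab{M}=Re=eR$'' is precisely where the full strength of the assumption would have to enter, since hereditary noetherian prime orders such as the ring $R$ above show that idempotent maximal ideals in noetherian prime rings are in general not generated by idempotent elements. A repair would have to exploit the weakly prime left ideals that are \emph{not} two-sided, for instance through Proposition \ref{prop: characterization of weakly prime left ideals (not two-sided)}(3) applied to left ideals chosen maximal with respect to a suitable property via the noetherian hypothesis. (For comparison purposes: the paper itself supplies no proof of this theorem --- it is quoted from Koh --- so the gap has to be judged on its own terms, and as written it is fatal.)
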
 
	It is evident that every structurally prime left ideal is weakly prime. It is easy to prove that every completely prime left ideal is weakly prime. Therefore, Theorem \ref{thm: Noncommutative Cohen using weakly prime left ideals} follows from either Theorem \ref{thm: Noncommutative Cohen using completely prime left ideals} or Theorem \ref{thm: Noncommutative Cohen using structurally prime left ideals}. Nevertheless, it is worth noting that in \cite{koh1971prime}, Koh formulated and proved his results for rings that are not necessarily unital.   
	
	In \cite[Proposition 1.1 and Proposition 1.3]{van1985weakly}, van der Walt provided several characterizations of weakly prime one-sided ideals. However, he did not observe that his notion coincides with Definition \ref{def: weakly prime left ideal}, as evidenced by the fact that he reproved one direction of Theorem \ref{thm: simplicity using weakly prime left ideals} while stating that his result strengthened Theorem \ref{thm: simplicity using weakly prime left ideals}.  Let us give a more complete version of van der Walt's characterizations of weakly prime ideals. 
	 \begin{proposition}\label{prop: characterization of weakly prime ideals}
		For a left ideal \( \textswab{p} \subseteq R \), the following statements are equivalent:
		\begin{itemize}
			\item[(1)] \( \textswab{p} \) is weakly prime.
			\item[(2)] For left ideals \( \textswab{A}, \textswab{B}\)  in \( R \) both containing \( \textswab{p} \) , \( \textswab{A}\textswab{B} \subseteq \textswab{p} \) implies \( \textswab{A} = \textswab{p} \) or \( \textswab{B} = \textswab{p} \).
			\item[(3)] For left ideals \( \textswab{A}, \textswab{B}\)  in \( R \), \( (\textswab{A}+\textswab{p})(\textswab{B}+\textswab{p}) \subseteq \textswab{p} \)  implies \( \textswab{A} \subseteq \textswab{p} \) or \( \textswab{B} \subseteq \textswab{p} \).
			\item[(4)] For left ideals \( \textswab{A}, \textswab{B}\)  in \( R \), \( \textswab{A}\textswab{B} \subseteq \textswab{p} \) and \( \textswab{p} \subseteq \textswab{A} \) imply \( \textswab{p} = \textswab{A} \) or \( \textswab{B} \subseteq \textswab{p} \).
			\item[(5)] For left ideals \( \textswab{A}, \textswab{B}\)  in \( R \), \( (\textswab{A}+\textswab{p})\textswab{B} \subseteq \textswab{p} \)  implies \( \textswab{A} \subseteq \textswab{p} \) or \( \textswab{B} \subseteq \textswab{p} \).
			\item[(6)] For \( a,b\in R\), \((a+\textswab{p})R(b+\textswab{p})\subseteq \textswab{p}\) implies \( a\in \textswab{p} \) or \(b \in \textswab{p} \).
			\item[(7)]  For a two-sided ideal \( \textswab{A} \subseteq R \) and a left ideal  \( \textswab{B} \subseteq R \)  , \( \textswab{A}\textswab{B} \subseteq \textswab{p} \) and \( \textswab{p}\textswab{B}\subseteq\textswab{p}\) imply \( \textswab{A} \subseteq \textswab{p} \) or \( \textswab{B} \subseteq \textswab{p} \).
			\item[(8)] For \( a,b\in R\), \(aRb\subseteq \textswab{p}\) and \(\textswab{p}Rb\subseteq \textswab{p}\) imply \( a\in \textswab{p} \) or \( b\in   \textswab{p} \).
		\end{itemize}
	\end{proposition}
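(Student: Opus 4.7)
The plan is to establish the cycle (1) \( \Rightarrow \) (2) \( \Rightarrow \) (3) \( \Rightarrow \) (5) \( \Rightarrow \) (1), then attach (4), (6), (7), and (8) via short side-implications. The workhorse throughout is the trivial observation that, for any left ideal \( \textswab{p} \) and any subset \( \textswab{X}\subseteq R \), one has \( \textswab{X}\textswab{p}\subseteq \textswab{p} \); in particular \( (\textswab{A}+\textswab{p})\textswab{p}\subseteq \textswab{p} \). The only genuinely non-automatic hypothesis appearing in Definition \ref{def: weakly prime left ideal} is therefore \( \textswab{p}\textswab{B}\subseteq \textswab{p} \), and almost every implication reduces to engineering this condition or absorbing \( \textswab{p} \) into \( \textswab{A} \) so that it becomes vacuous.

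For the core cycle: (1)\( \Rightarrow \)(2) uses that \( \textswab{p}\subseteq \textswab{A} \) already forces \( \textswab{p}\textswab{B}\subseteq \textswab{A}\textswab{B}\subseteq \textswab{p} \), after which (1) together with the reverse containments upgrades inclusions to equalities. (2)\( \Rightarrow \)(3) applies (2) to the left ideals \( \textswab{A}+\textswab{p} \) and \( \textswab{B}+\textswab{p} \), both of which contain \( \textswab{p} \). (3)\( \Rightarrow \)(5) follows from the decomposition \( (\textswab{A}+\textswab{p})(\textswab{B}+\textswab{p}) = (\textswab{A}+\textswab{p})\textswab{B} + (\textswab{A}+\textswab{p})\textswab{p}\subseteq \textswab{p} \). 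Finally, (5)\( \Rightarrow \)(1) is the reverse assembly: from the two hypotheses one obtains \( (\textswab{A}+\textswab{p})\textswab{B} = \textswab{A}\textswab{B} + \textswab{p}\textswab{B}\subseteq \textswab{p} \).

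Statement (4) is a mere rewording of (2), so (2)\( \iff \)(4) is immediate. For (1)\( \iff \)(7), the forward direction is a specialisation, while the reverse reuses the trick of Proposition \ref{prop: characterization of structurally prime ideals}: given the hypotheses of (1) for left ideals \( \textswab{A},\textswab{B} \), replace \( \textswab{A} \) by the two-sided ideal \( \textswab{A}R \), observe \( (\textswab{A}R)\textswab{B} = \textswab{A}(R\textswab{B}) = \textswab{A}\textswab{B} \), and recover \( \textswab{A}\subseteq \textswab{A}R\subseteq \textswab{p} \). To equate (6) and (8), expand \( (a+\textswab{p})R(b+\textswab{p}) \) into four summands and note that \( aR\textswab{p} \) and \( \textswab{p}R\textswab{p} \) are automatically inside \( \textswab{p} \), leaving precisely \( aRb\subseteq \textswab{p} \) and \( \textswab{p}Rb\subseteq \textswab{p} \) as the non-trivial conditions. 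I would then tie (8) to (1) by choosing \( \textswab{A}=Ra \) and \( \textswab{B}=Rb \) in one direction and, in the other, extracting any \( a\in \textswab{A}\setminus \textswab{p} \) and checking \( aRb\subseteq \textswab{A}\textswab{B} \) and \( \textswab{p}Rb\subseteq \textswab{p}\textswab{B} \) via the identities \( arb=a(rb) \) and \( prb=p(rb) \).

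The only step demanding real attention is this final element/ideal passage, where one needs the fact that \( \textswab{B} \) being a left ideal lets us rewrite \( arb \) as a product of an element of \( \textswab{A} \) with an element of \( \textswab{B} \); once this is in hand, everything else is bookkeeping in the lattice of left ideals resting on the single identity \( R\textswab{p}\subseteq \textswab{p} \).
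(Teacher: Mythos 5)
Your proof is correct, and on the implications the paper actually writes out --- $(1)\Rightarrow(2)$ via \( \textswab{p}\textswab{B}\subseteq \textswab{A}\textswab{B}\subseteq\textswab{p} \), $(5)\Rightarrow(1)$ via \( (\textswab{A}+\textswab{p})\textswab{B}=\textswab{A}\textswab{B}+\textswab{p}\textswab{B} \), $(1)\Leftrightarrow(7)$ via the \( \textswab{A}R \) trick, and $(6)\Leftrightarrow(8)$ by expanding into four summands --- you argue exactly as the author does. The difference is one of architecture: the paper outsources the block $(2)\Leftrightarrow(3)\Leftrightarrow(4)\Leftrightarrow(5)\Leftrightarrow(6)$ to van der Walt's Proposition 1.1 and reaches (6) only through that cited chain, whereas you close the cycle $(1)\Rightarrow(2)\Rightarrow(3)\Rightarrow(5)\Rightarrow(1)$ yourself and attach (6) through a direct proof of $(8)\Leftrightarrow(1)$ (taking \( \textswab{A}=Ra \), \( \textswab{B}=Rb \) one way and extracting elements the other way). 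This buys a self-contained proof at the cost of a little more writing, and your organizing principle --- that \( \textswab{X}\textswab{p}\subseteq\textswab{p} \) is automatic, so every hypothesis involving \( \textswab{p} \) on the right can be absorbed --- is a clean way to see why all eight conditions collapse to one. One small caveat: $(4)$ is not literally a rewording of $(2)$, since in $(4)$ the ideal \( \textswab{B} \) need not contain \( \textswab{p} \); the direction $(2)\Rightarrow(4)$ still needs you to replace \( \textswab{B} \) by \( \textswab{B}+\textswab{p} \) and absorb \( \textswab{A}\textswab{p}\subseteq\textswab{p} \), which your stated workhorse observation covers but which deserves a sentence rather than the word ``mere.''
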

	\begin{proof}
		The equivalences (2)\(\iff (3) \iff (4) \iff (5) \iff \)(6) are proved in \cite[Proposition 1.1]{van1985weakly}. To prove (1)\(\implies \)(2), suppose  \( \textswab{A}\textswab{B}\subseteq\textswab{p}\) for some left ideals \( \textswab{A},\textswab{B}\) containing \(\textswab{p}\). Then, \( \textswab{p}\textswab{B}\subseteq \textswab{A}\textswab{B}\subseteq\textswab{p}\). If  \( \textswab{p} \) is weakly prime, then \( \textswab{A}\subseteq\textswab{p}\) or \( \textswab{B}\subseteq\textswab{p}\), proving (2). To prove (5)\(\implies \)(1), suppose that  \( \textswab{A}\textswab{B}\subseteq\textswab{p}\) and \( \textswab{p}\textswab{B}\subseteq\textswab{p}\) for some left ideals \( \textswab{A},\textswab{B}\) in \(R\). It follows that 
		\[
			(\textswab{A}+\textswab{p})\textswab{B} \subseteq \textswab{A}\textswab{B}+\textswab{p}\textswab{B} \subseteq \textswab{p}. 	
		\]
		If (5) holds, then either  \( \textswab{A} \subseteq \textswab{p} \) or \( \textswab{B} \subseteq \textswab{p} \), which implies (1). The proof of (1)\(\iff\)(7) is similar to the proof of (1)\(\iff\)(5) in Proposition \ref{prop: characterization of structurally prime ideals}.   The equivalence of (6) and (8) is immediate. 
	\end{proof}
	
	It is evident that  for two-sided ideals, the concept of a weakly prime left ideal coincides with the concept of a structurally prime left ideal. For left ideals that are not two-sided, the following result gives more characterizations of weakly prime left ideals. 
	
	\begin{proposition}\label{prop: characterization of weakly prime left ideals (not two-sided)}
		For any left ideal \( \textswab{p}\) in a ring \( R \) that is not two-sided, the following statements are equivalent:
		\begin{itemize}
			\item[(1)] \( \textswab{p} \) is weakly prime.
			
			\item[(2)]  For a left ideal \( \textswab{A} \),  \( \textswab{p}\textswab{A} \subseteq \textswab{p} \) implies \( \textswab{A} \subseteq \textswab{p} \).
			
			\item[(3)]  For \( b \in R \), \( \textswab{p}Rb \subseteq \textswab{p} \) implies \( b \in \textswab{p} \).
		\end{itemize}
	\end{proposition}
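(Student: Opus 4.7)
The plan is to prove the cycle $(1) \Rightarrow (3) \Rightarrow (2) \Rightarrow (1)$, and to note at the outset that the hypothesis ``$\textswab{p}$ is not two-sided'' (equivalently $\textswab{p}R \not\subseteq \textswab{p}$) is used only in the first implication; the remaining two implications are formal consequences of the definitions together with the characterizations already established in Proposition~\ref{prop: characterization of weakly prime ideals}.

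For $(1) \Rightarrow (3)$, the natural tool is characterization (8) of the preceding proposition. Assume $\textswab{p}Rb \subseteq \textswab{p}$. Since $\textswab{p}R \not\subseteq \textswab{p}$, I can choose $p \in \textswab{p}$ and $r \in R$ with $a \colonequals pr \notin \textswab{p}$. Then
$aRb = prRb \subseteq pRb \subseteq \textswab{p}Rb \subseteq \textswab{p}$,
so the two hypotheses of (8) hold for this $a$ and $b$; since $a \notin \textswab{p}$, weak primeness forces $b \in \textswab{p}$.

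For $(3) \Rightarrow (2)$, let $\textswab{p}\textswab{A} \subseteq \textswab{p}$ and take any $a \in \textswab{A}$. Because $\textswab{A}$ is a left ideal, $Ra \subseteq \textswab{A}$, whence $\textswab{p}Ra \subseteq \textswab{p}\textswab{A} \subseteq \textswab{p}$, and (3) yields $a \in \textswab{p}$; hence $\textswab{A} \subseteq \textswab{p}$. For $(2) \Rightarrow (1)$, suppose $\textswab{A}\textswab{B} \subseteq \textswab{p}$ and $\textswab{p}\textswab{B} \subseteq \textswab{p}$ for left ideals $\textswab{A},\textswab{B}$: applying (2) to the second inclusion directly produces $\textswab{B} \subseteq \textswab{p}$, which is one of the alternatives required by the definition of weak primeness.

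The only conceptual step is the construction of the witness $a \notin \textswab{p}$ in $(1) \Rightarrow (3)$, which is precisely where the not-two-sided hypothesis is indispensable. Without it the implication fails: for a two-sided ideal $\textswab{p}$ the condition $\textswab{p}Rb \subseteq \textswab{p}$ is automatic for every $b \in R$, so (3) would force $\textswab{p} = R$, whereas a prime two-sided ideal is already weakly prime in the sense of Definition~\ref{def: weakly prime left ideal}. This explains why the proposition is formulated for left ideals that are genuinely one-sided, and it is the only step whose verification requires any care.
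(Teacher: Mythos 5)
Your proof is correct. Note that the paper does not actually prove this proposition---it simply cites van der Walt's Proposition 1.3---so your argument is a genuine addition rather than a restatement. The cycle $(1)\Rightarrow(3)\Rightarrow(2)\Rightarrow(1)$ is sound: the key step $(1)\Rightarrow(3)$ correctly leans on characterization (8) of Proposition \ref{prop: characterization of weakly prime ideals}, and the passage from $\textswab{p}R\not\subseteq\textswab{p}$ to the existence of a single product $pr\notin\textswab{p}$ is legitimate because $\textswab{p}$ is closed under sums, so if every such product lay in $\textswab{p}$ then all of $\textswab{p}R$ would. The implications $(3)\Rightarrow(2)$ and $(2)\Rightarrow(1)$ are the formal verifications you describe (with $\textswab{p}\neq R$ automatic since $R$ is two-sided), and your closing remark correctly isolates why the not-two-sided hypothesis is indispensable. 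One small dependency worth flagging: your argument inherits whatever reliance Proposition \ref{prop: characterization of weakly prime ideals} has on the cited equivalences $(2)\Leftrightarrow\cdots\Leftrightarrow(6)$ from van der Walt, since $(8)$ is reached through $(6)$; if you wanted the proof fully self-contained you would instead derive $(8)$ directly from Definition \ref{def: weakly prime left ideal} by applying it to the left ideals $Ra+\textswab{p}$ and $Rb$, which is a routine computation in the same spirit as the proof of $(5)\Rightarrow(1)$ given in the paper.
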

	\begin{proof}
		For a proof, see \cite[Proposition 1.3]{van1985weakly}.
	\end{proof}
	 As a simple consequence of  Proposition \ref{prop: characterization of weakly prime left ideals (not two-sided)}, we see that a proper left ideal \( \textswab{p}\) in \(R\) satisfying \( \textswab{p}R = R\) is necessarily weakly prime. Here is an example of a left ideal which is weakly prime, but not completely prime as a left ideal: 
	 \begin{example}
	 	\label{exam: weakly, not completely}
		The zero ideal in a matrix ring over a division ring is structurally prime, hence weakly prime.   However, the zero ideal  in a matrix ring over a division ring is not completely prime as a left ideal. See Example \ref{exam: structurally, not completely}.  
	 \end{example}
	 A weakly prime left ideal may not be structurally prime, as shown in the following  example. 
	 \begin{example}
	 	\label{exam: weakly, not structurally}
	 	Consider the ring \(R\) and left ideal \(\textswab{p}\) in Example \ref{exam: not structurally, completely}. Since \(\textswab{p}\) is completely prime, it must be weakly prime. Note that \(\textswab{p}\) is not structurally prime. 
	 \end{example}
\end{subsection}
\begin{subsection}{More Characterizations of Primeness} 
	This part gives more characterizations of the reviewed notions of primeness in terms the notion of a left quotient ideal by an element. For a left ideal \(I\) and an element \(a\) in a ring \(R\), the \textit{left quotien ideal} of \(I\) by \(a\) is defined as
	\[
		(I:a) = \left\lbrace x\in R \,|\,  xa\in I\right\rbrace. 
	\]
	It is easy to see that \((I:a)\) is always a left ideal. The largest two-sided ideal in a one-sided ideal \(I\) is denoted by \(\operatorname{id}(I)\). The following proposition provides characterizations of the four notions of primeness for one-sided ideals in terms of \((I:a)\) and \(\operatorname{id}(I)\). The easy proof is left to the reader. 
	\begin{proposition}
		For any proper left ideal \(\textswab{p}\) in a ring \(R\), the following statements hold: 
		\begin{itemize}
			\item[(1)] \(\textswab{p}\) is extremely prime iff \((\textswab{p}:b)\subseteq \textswab{p}\) for all \(b\in R\setminus \textswab{p}\).
			
			\item[(2)]  \(\textswab{p}\) is completely prime iff \((\textswab{p}:b) = \textswab{p}\)  for all \(b\in R\setminus \textswab{p}\) such that \(\textswab{p}\subseteq (\textswab{p}:b)\).
			
			\item[(3)]  \(\textswab{p}\) is structurally prime iff \(\operatorname{id}((\textswab{p}:b)) \subseteq \textswab{p}\)  for all \(b\in R\setminus \textswab{p}\).
			\item[(4)] \(\textswab{p}\) is weakly prime iff \(\operatorname{id}((\textswab{p}:b)) \subseteq \textswab{p}\)  for all \(b\in R\setminus \textswab{p}\) such that \(\textswab{p}R\subseteq (\textswab{p}:b)\).
		\end{itemize}
	\end{proposition}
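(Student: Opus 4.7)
The plan is to translate each of the four primeness conditions into the quotient-ideal language via a small dictionary and then read off each equivalence from the characterizations already established. The key observations are:
(i) $xb\in\textswab{p}$ iff $x\in(\textswab{p}:b)$;
(ii) $\textswab{A}b\subseteq\textswab{p}$ iff $\textswab{A}\subseteq(\textswab{p}:b)$;
(iii) when $\textswab{A}$ is two-sided, $\textswab{A}\subseteq(\textswab{p}:b)$ is equivalent to $\textswab{A}\subseteq\operatorname{id}((\textswab{p}:b))$, since $\operatorname{id}((\textswab{p}:b))$ is by definition the largest two-sided ideal contained in $(\textswab{p}:b)$;
(iv) $\textswab{p}b\subseteq\textswab{p}$ iff $\textswab{p}\subseteq(\textswab{p}:b)$, and $\textswab{p}Rb\subseteq\textswab{p}$ iff $\textswab{p}R\subseteq(\textswab{p}:b)$.

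For (1), apply (i) directly. If $\textswab{p}$ is extremely prime and $b\notin\textswab{p}$, then any $x\in(\textswab{p}:b)$ satisfies $xb\in\textswab{p}$, forcing $x\in\textswab{p}$; conversely, $ab\in\textswab{p}$ with $b\notin\textswab{p}$ gives $a\in(\textswab{p}:b)\subseteq\textswab{p}$. For (2), combine (i) and (iv): the hypothesis $\textswab{p}b\subseteq\textswab{p}$ is $\textswab{p}\subseteq(\textswab{p}:b)$, so the complete-primeness condition says precisely that for each $b\notin\textswab{p}$ with $\textswab{p}\subseteq(\textswab{p}:b)$, the inclusion $(\textswab{p}:b)\subseteq\textswab{p}$ holds; together these give $(\textswab{p}:b)=\textswab{p}$, and the reverse is immediate.

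For (3), I would invoke item (5) of Proposition \ref{prop: characterization of structurally prime ideals}: structural primeness is equivalent to the implication $\textswab{A}b\subseteq\textswab{p}\Rightarrow\textswab{A}\subseteq\textswab{p}$ or $b\in\textswab{p}$ for two-sided $\textswab{A}$. By (ii) and (iii) this is exactly the statement that whenever $b\notin\textswab{p}$, every two-sided ideal in $(\textswab{p}:b)$ lies in $\textswab{p}$, which is equivalent to $\operatorname{id}((\textswab{p}:b))\subseteq\textswab{p}$. For (4), use item (7) of Proposition \ref{prop: characterization of weakly prime ideals} and specialize $\textswab{B}=Rb$; since $\textswab{A}R=\textswab{A}$ for two-sided $\textswab{A}$, the two hypotheses become $\textswab{A}b\subseteq\textswab{p}$ and $\textswab{p}Rb\subseteq\textswab{p}$, and the conclusion $\textswab{B}\subseteq\textswab{p}$ becomes $b\in\textswab{p}$. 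Translating via (ii), (iii), (iv) yields the stated characterization; conversely, given a left ideal $\textswab{B}\not\subseteq\textswab{p}$, pick any $b\in\textswab{B}\setminus\textswab{p}$, note that $Rb\subseteq\textswab{B}$ gives $\textswab{A}b\subseteq\textswab{p}$ and $\textswab{p}Rb\subseteq\textswab{p}$, and conclude $\textswab{A}\subseteq\operatorname{id}((\textswab{p}:b))\subseteq\textswab{p}$.

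There is no real obstacle: each step is a mechanical application of the dictionary above together with a previously proved characterization, which is why the author declares the proof routine. The only point requiring minor care is recognizing in (4) that the element form follows from the ideal form by taking $\textswab{B}=Rb$ and exploiting that $R$ is unital, so that $b\in\textswab{p}\iff Rb\subseteq\textswab{p}$.
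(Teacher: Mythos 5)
Your proof is correct. The paper gives no argument here (the proof is explicitly ``left to the reader''), and your dictionary between $xb\in\textswab{p}$, $\textswab{A}b\subseteq\textswab{p}$, $\textswab{p}b\subseteq\textswab{p}$, $\textswab{p}Rb\subseteq\textswab{p}$ and the corresponding inclusions into $(\textswab{p}:b)$, combined with characterizations (5) of Proposition \ref{prop: characterization of structurally prime ideals} and (7) of Proposition \ref{prop: characterization of weakly prime ideals}, is precisely the routine verification the author intends; all four equivalences, including the slightly delicate backward direction of (4) via choosing $b\in\textswab{B}\setminus\textswab{p}$, check out.
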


\end{subsection}


\end{section} 

\begin{section}{Prime Left Ideals in Principal Ideal Domains}\label{sec: primness for PID}
	 
	 In this section, we study the four notions of one-sided primeness, mentioned in the previous chapter, in the context of principal ideal domains. 
	 
\begin{subsection}{Prinicipal Ideal Domains} 
	 For the reader’s convenience, we briefly review some necessary concepts regarding principal ideal domains. Most of the material is standard and can be found in, for example, \cite{cohn2006free}. A domain in which all left ideals and all right ideals are principal is called a \textit{principal ideal domain}, or PID for short. In what follows, let \(R\) be a PID. 
	 
	 An element \(a\in R\) is called a \textit{right }(resp., \textit{left}) \textit{factor} of \(b\in R\) if \(b\in Ra\) (resp., \(b\in aR\)).  A left factor \(a\) of \(b\) is called \textit{proper} if \(a\notin bR\). An element \(a\in R\) is called a  \textit{factor} of \(b\) if \(b = ras \) for some \(r,s              \in R\).  An element \(a\in R\) is called a
	 \textit{right }(resp., \textit{left}) \textit{associate} of \(b\in R\) if \(b = au\) (resp., \(b = ua\)) for some unit \(u\in R\), that is, \(aR = bR\) (resp., \(Ra = Rb\)). 
	 An element \(a\in R\) is called an \textit{associate} of \(b\in R\) if \(a = ubv\) for some units \(u,v\in R\).
	 Elements \(a,b\in R\) are called \textit{similar} iff \(R/Ra\) and \(R/Rb\) are isomorphic as left \(R\)-modules. It is known that the notion of similarity is left-right symmetric, that is, \(a,b\in R\) are similar iff \(R/aR\) and \(R/Rb\) are isomorphic as right \(R\)-modules.  A non-unit \(a\in R\) is said to be \textit{irreducible} (or an \textit{atom}) if a relation \(a = bc\) in \(R\) implies that \(b\) or \(c\) is a unit.	Every PID is a UFD in the following sense: a nonzero element \(a\in R\) can be written as a product of atoms, and furthermore,   if \(a=a_1\cdots a_n = b_1\cdots b_m\) are two factorizations of \(a\) into atoms, then \(m=n\) and there is a permutation \(\pi\in S_n\) such that \(a_i\) is similar to \(b_{\pi(i)}\) for all \(i\).

	  A relation \(ax = by\) in \(R\) is called a \textit{left} (resp., \textit{right}) \textit{comaximal relation} if \(aR+bR = R\)  (resp., \(Rx+Ry = R\)). A comaximal relation is a relation that is both left and right comaximal. 	 The following result describes  similar elements in terms of comaximal relations  in PIDs (we include a proof, as the author was unable to locate a suitable reference in the existing literature):
	  \begin{proposition}\label{prop: similarity in PID}
	  	In any PID \(R\), elements \(a,b\in R\) are similar iff they satisfy a comaximal relation \(ax=yb\) in \(R\).
	  \end{proposition}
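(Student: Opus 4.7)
The plan is to handle the two directions separately, working with the free presentations $0 \to R \xrightarrow{\cdot a} R \to R/Ra \to 0$ and $0 \to R \xrightarrow{\cdot b} R \to R/Rb \to 0$ as left $R$-modules (the leftmost maps are right multiplication by $a$ and $b$, which are left $R$-linear).

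For ($\Rightarrow$), suppose $\phi: R/Ra \to R/Rb$ is a left $R$-module isomorphism. Since $R$ is projective over itself, I would lift $\phi$ to a left $R$-linear map $\tilde\phi: R \to R$; because any such endomorphism is right multiplication by its value at $1$, we have $\tilde\phi = (\cdot x)$ for $x := \tilde\phi(1)$. Well-definedness on $Ra$ forces $ax \in Rb$, producing a $y$ with $ax = yb$, which already gives the relation. I would similarly lift $\phi^{-1}$ to right multiplication by some $x'$, obtaining $bx' = y'a$. The identities $\phi \circ \phi^{-1} = \mathrm{id}_{R/Rb}$ and $\phi^{-1} \circ \phi = \mathrm{id}_{R/Ra}$ then yield $x'x \equiv 1 \pmod{Rb}$ and $xx' \equiv 1 \pmod{Ra}$; the first gives $Rx + Rb = R$ at once (right comaximality). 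For the left comaximality $aR + yR = R$, I would use the compatibility of the lifts with the kernel inclusions: stacking the commuting square for $\phi$ on top of the one for $\phi^{-1}$ and tracking the leftmost column produces the identity $axx' = yy'a$. Writing $xx' = 1 + ua$ and canceling $a$ on the right (valid since $R$ is a domain) gives $yy' - au = 1$, whence $1 \in yR + aR$.

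For ($\Leftarrow$), given a comaximal relation $ax = yb$, I would define $\phi: R/Ra \to R/Rb$ by $\phi(r + Ra) = rx + Rb$. Well-definedness follows from $ax = yb \in Rb$, and surjectivity from $Rx + Rb = R$. For injectivity, I would analyze the syzygy module $K = \{(r,t) \in R^2 : rx = tb\}$, the kernel of the left $R$-linear map $R^2 \to R$, $(r,t) \mapsto rx - tb$. Since the image is $Rx + Rb = R$, the sequence $0 \to K \to R^2 \to R \to 0$ splits, and $K$ is free of rank one because submodules of free modules over a PID are free (with rank computed by the rank formula). Let $(a_0, y_0)$ generate $K$; since $(a,y) \in K$, there is $q \in R$ with $(a,y) = q(a_0, y_0)$, and the hypothesis $aR + yR = R$ gives $u, v$ with $q(a_0 u + y_0 v) = au + yv = 1$. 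Thus $q$ has a right inverse, hence is a unit (domain), so $(a,y)$ also generates $K$. Any $(r,t)$ with $rx = tb$ is therefore a left multiple of $(a,y)$, forcing $r \in Ra$, which gives injectivity.

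The main obstacle is extracting the left comaximality $aR + yR = R$ in the forward direction: the right comaximality drops out of surjectivity of $\phi$ essentially for free, but the left side requires tracking the isomorphism and its inverse at both the presentation level and the kernel level, and crucially depends on being able to cancel $a$ on the right in $axx' = yy'a$, an asymmetric step that is licensed only by the domain hypothesis.
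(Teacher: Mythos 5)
Your proof is correct, but it follows a genuinely different route from the paper's. In the forward direction the paper does not lift the isomorphism by hand: it cites Cohn's Proposition 1.3.6 to produce $x$ with $Rb+Rx=R$ and $(Rb:x)=Ra$, writes $ax=yb$, and then gets left comaximality by a divisor computation --- set $aR+yR=dR$, factor $a=da_1$, $y=dy_1$, cancel $d$ to obtain $a_1x=y_1b$, and use $(Rb:x)=Ra$ to force $d$ to be a unit. Your lifting argument replaces both the citation and that computation: the two composites $\phi^{-1}\phi$ and $\phi\phi^{-1}$ give $xx'\equiv 1 \pmod{Ra}$ and $x'x\equiv 1\pmod{Rb}$, and the kernel-level identity $axx'=yy'a$ plus right cancellation in a domain yields $1=yy'-au\in yR+aR$; this makes the direction self-contained. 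In the converse the paper merely asserts that $r+Ra\mapsto rx+Rb$ is an isomorphism, whereas you actually prove injectivity via the syzygy module $K=\{(r,t): rx=tb\}$, showing it is a rank-one free direct summand of $R^2$ generated by $(a,y)$ because $aR+yR=R$ forces the cofactor $q$ to be right invertible, hence a unit. The trade-off is that your argument imports the fact that submodules of free modules over a (noncommutative) PID are free of well-defined rank (true, since a PID is left hereditary and, being a noetherian domain, an Ore domain with IBN), while the paper outsources the forward direction to Cohn's structure theory of cyclic modules and leaves the converse to the reader. Both arguments are sound; yours is more detailed exactly where the paper is terse.
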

	  \begin{proof}
	  	First, let \(a,b\in R\) be similar, i.e., \(R/Ra\) and \(R/Rb\) are isomorphic as left \(R\)-modules. It follows from \cite[Proposition 1.3.6]{cohn2006free} that there exists \(x\in R\) such that  \(Rb + Rx = R \) and  \((Rb:x) = Ra\). We have \(ax = yb\) for some \(y\in R\). To prove that \(ax = yb\) is a comaximal relation, we need to show that \(aR+yR=R\). We have \(aR+yR = dR\) for some \(d\in R\), implying \(a=da_1, y=dy_1\) for some \(a_1,d_1\in R\). The relation \(ax = yb\) simplifies to \(a_1x = y_1b\). Since \(a_1\in (Rb:x) = Ra\), \(d\) must be a unit because \(a_1\in Ra\) and \(a\in Ra_1\). This completes the proof of the forward direction. 
	  	
	  	Conversely, let  \(ax=yb\) be a comaximal relation in \(R\). It is easy to verify that the map 
	  	\[
	  	r+Ra\mapsto rx+Rb 
	  	\]
	  	is an isomorphism between
	  	the left \(R\)-modules \(R/Ra\) and \(R/Rb\), i.e.,  \(a,b\in R\) are similar, and we are done. 
	  \end{proof}	
	 
	 A nonzero element \(a\in R\) is called \textit{invariant} if \(Ra = aR\).  It is easy to see that a nonzero element \(a\in R\) is invariant iff \(aR\subseteq Ra\) iff \(Ra\subseteq aR\). Every nonzero ideal of \(R\) is of the form \(Ra=aR\) for some invariant element \(a\in R\), which is unique up to (right or left) associates. For every \(a\in R\) and ideal \(I\) in \(R\), \(I\subseteq Ra\) iff \(I\subseteq aR\) (proof: if \(Rb=bR\subseteq Ra\implies \exists r,\, b=ra \implies\exists s,\, rb = bs=ras\implies b\in aR\implies I\subseteq aR\)). An element \(a\in R\) is called \textit{bounded} if \(Ra\) contains a nonzero ideal, or equivalently,  \(aR\) contains a nonzero ideal. For a bounded element \(a\), a bound \(a^*\) of \(a\) is defined to be a generator of the largest ideal contained in \(Ra\). The element \(a^*\) is unique up to associates, called the \textit{bound} of \(a\). Note that \(Ra^*\) is also the largest ideal contained in \(aR\). If \(a^*\) is the bound of \(a\), then
	 \[
	 	Ra^* = a^*R = Ann(\frac{R}{Ra}) = Ann(\frac{R}{aR}).
	  \] 
	 In particular, similar elements have the same bound. It is easy to show that an element \(a\in R\) is bounded iff \(ras\) is invariant for some nonzero elements \(r,s\in R\).

\end{subsection}

	\begin{subsection}{Extremely Prime Left Ideals in PIDs}
	In this part, we give a description of extremely prime left ideals of a PID. In what follows, let \(R\) be a PID. Since \( R \) is a domain, the zero ideal is extremely prime. The following proposition describes nonzero extremely prime left ideals of \(R\).
	\begin{proposition}\label{prop: extremeply prime ideal of PID }
		A nonzero left ideal \( Ra \) in \( R \) is extremely prime iff \(a\) is irreducible and invariant.
	\end{proposition}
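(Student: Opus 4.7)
The plan is to prove the two implications separately, the backward direction being a quick consequence of standard ring-theoretic facts and the forward direction requiring more care for the invariance claim. The key technical device is the quotient left $R$-module $R/Ra$.

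For the backward direction, suppose $a$ is irreducible and invariant. Invariance gives $Ra=aR$, so $Ra$ is a two-sided ideal. Irreducibility of $a$ makes $Ra$ a maximal left ideal: any proper left ideal strictly containing $Ra$ would be principal, say $Rd$, and $a\in Rd$ would yield a factorization $a=ed$, forcing $e$ or $d$ to be a unit and contradicting the strict inclusions. Since $Ra$ is two-sided and maximal as a left ideal, $R/Ra$ is a ring whose left regular module is simple; a standard Schur-type argument (every nonzero element has a left inverse, and in a ring where every nonzero element has a left inverse, one-sided inverses coincide) shows it is a division ring, hence a domain. Therefore $Ra$ is completely prime as a two-sided ideal, which is precisely the extreme primeness condition for the left ideal $Ra$.

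For the forward direction, suppose $Ra$ is extremely prime. To prove $a$ is irreducible, consider a factorization $a=bc$; extreme primeness forces $b\in Ra$ or $c\in Ra$. In the first case, writing $b=ra$ gives $a=rac$, and the additivity of the length function $\ell$ on the PID (from the uniqueness of atomic factorizations recalled in the preliminaries, with $\ell=0$ characterizing units) yields $\ell(a)=\ell(r)+\ell(a)+\ell(c)$, so $\ell(c)=0$ and $c$ is a unit; the other case is symmetric. For invariance, the plan is to show $aR\subseteq Ra$. With $a$ now irreducible, $Ra$ is a maximal left ideal, so $R/Ra$ is a simple left $R$-module. For any $c\notin Ra$, the $R$-linear map $r\mapsto rc+Ra$ surjects onto $R/Ra$ with kernel $(Ra:c)$, so $(Ra:c)$ is itself a maximal left ideal. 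The extreme primeness characterization $(Ra:c)\subseteq Ra$ (item (1) of the preceding proposition), combined with maximality of both ideals, forces $(Ra:c)=Ra$. Since $a\in Ra=(Ra:c)$, one obtains $ac\in Ra$ for every $c\notin Ra$; for $c\in Ra$ the relation $ac\in Ra$ is automatic since $Ra$ is a left ideal. Hence $aR\subseteq Ra$, which by the preliminaries gives $Ra=aR$, so $a$ is invariant.

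The main obstacle is the invariance step. Extreme primeness by itself delivers only the single inclusion $(Ra:c)\subseteq Ra$ and does not directly produce any right-ideal behavior of $Ra$. The crucial upgrade is the observation that simplicity of $R/Ra$ (itself a consequence of irreducibility together with the PID structure) makes $(Ra:c)$ a maximal left ideal, so the inclusion must be an equality; the trivial membership $a\in Ra$ then translates into the right-multiplication relation $ac\in Ra$ that witnesses $aR\subseteq Ra$.
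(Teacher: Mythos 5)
Your proof is correct. The forward direction is in substance the same as the paper's: irreducibility comes from additivity of factorization length (the paper instead pulls an atom of $a$ into $Ra$ using extreme primeness, which amounts to the same thing), and the invariance step in both arguments reduces to showing $(Ra:c)=Ra$ for every $c\notin Ra$ via the isomorphism $R/(Ra:c)\cong R/Ra$ induced by right multiplication by $c$ --- the paper concludes by noting that the generator $b$ of $(Ra:c)$ is similar to $a$, hence irreducible, and lies in $Ra$ by extreme primeness, whereas you conclude by noting that $(Ra:c)$ is a maximal left ideal contained in the proper ideal $Ra$; these are two phrasings of the same equality. Where you genuinely diverge is the backward direction: the paper gives a bare-hands comaximality computation (from $c\notin aR$ and $a$ irreducible it gets $cR+aR=R$, multiplies by $b$, and lands $b$ in $Ra$), while you observe that $Ra$ is a two-sided ideal that is maximal as a left ideal, so $R/Ra$ is a division ring and $Ra$ is completely prime as a two-sided ideal. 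Your route is slightly less elementary but makes transparent the remark the paper records right after the proposition, namely that the nonzero extremely prime left ideals of a PID are exactly the nonzero completely prime two-sided ideals; the paper's route avoids any appeal to the structure of the quotient ring. One cosmetic point: when proving irreducibility you should note explicitly that $a$ is a non-unit (which follows from $Ra\neq R$ in the definition of extremely prime), since an atom is by definition a non-unit.
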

	\begin{proof}
		
		First, suppose that \(  Ra \) is a nonzero extremely prime left ideal of  \( R \). Using the fact that \(R\) is (left and right) noetherian, we can write \( a = a_1\cdots a_n\), where \(a_1,\dots,a_n\in R\) are irreducible. Since \(Ra\) is extremely prime, \(a_i\in Ra\) for some \(i\).  Since \(a_i\) is irreducible and \(a\) is a non-unit,  \( a \) must be irreducible. 
		Fix \(c\in R\setminus Ra\).  Then \((Ra : c) = Rb\) for some nonzero element \( b\in R\). Since \(bc\in Ra\) and \(c\notin Ra\), we must have \( b \in Ra \). It is easy to verify that the map 
		\[
		x+Rb\mapsto xc+Ra 
		\]
		is an isomorphism between
		the left \(R\)-modules \(R/Ra\) and \(R/Rb\), i.e., \(a,b\) are similar. In particular, \(b\) is irreducible, and since \(b\in Ra\), we conclude that \((Ra : c)=Ra\). This proves that  \(aR\subseteq Ra\), implying that \(a\) is invariant. This completes the proof of the forward direction.
		
		Conversely, suppose that \(Ra\) is a nonzero left ideal such that \(aR = Ra\) and  \(a\) is irreducible. Let \(bc\in Ra\) and \(c\notin Ra\). We need to show that \(b\in Ra\). Since \(a\) is irreducible and \(c\notin aR\), we see that \(cR+aR=R\). Multiplying by \(b\), we obtain \(bcR+baR=bR\). Then
		\[
			b\in bR = bcR+baR \subseteq RaR + RaR = Ra.
		\] 
		This proves that 
		\(Ra\) is extremely prime, completing the proof of the reverse direction. 
	\end{proof}
	As a consequence of the proposition, we note that any extremely prime left ideal of a PID is a completely prime two-sided ideal, and vice versa. 
	
	The structure of invariant elements in skew polynomial rings is well understood, thanks to the work of Cauchon \cite{cauchon2006ideaux}, Carcanague \cite{carcanague1971ideaux}, Lam and Leroy \cite{lam1988algebraic}, among others. We give some examples using their results.  
	\begin{example}
		Let \(D[t]\) denote the ring of polynomials in one variable over a division ring \(D\). It is well-known that \(D[t]\) is a PID. It is easy to see that a monic polynomial in \(D[t]\) is invariant iff it is central. By Proposition \ref{prop: extremeply prime ideal of PID }, a nonzero left ideal \(I\) in \(D[t]\) is extremely prime iff \(I=D[t]f(t)\), where \(f(t)\) is a central irreducible polynomial. In the case where \(D = \mathbb{H}\) is the division ring of quaternions over \(\mathbb{R}\), a nonzero left ideal \(I\) in \(\mathbb{H}[t]\)  is extremely prime iff \(I=\mathbb{H}[t](t-a)\) for some \(a\in\mathbb{R}\). This is a consequence of the fundamental theorem of algebra for quaternions \cite[Thoerem 16.14]{lam2001afirst}. 
	\end{example}
	
	\begin{example}
		Let \(D[t;\sigma]\) denote the ring of skew polynomials associated to an automoprhism \(\sigma\) of a division ring \(D\). It is well-known that \(D[t;\sigma]\) is a PID. If \(\sigma\) is of infinite inner order, then every  invariant elements of \(D[t;\sigma]\) is of the form \(at^m\) for some \(a\in D\setminus \{0\}\) and \(m\geq 1\). It follows that \(D[t]t\) is
		the only nonzero extremely prime left ideal of \(D[t;\sigma]\) provided that \(\sigma\) is of infinite inner order.  
	\end{example}
	
	\end{subsection}
	\begin{subsection}{Completely Prime Left Ideals in PIDs }
		We begin with a definition that is needed to describe completely prime one-sided ideals of PIDs. A nonzero element \(a\) in a ring \(R\) is said to be \textit{c-reducible} if \(a\) can be written  as \(a = bb' = c'c\) for some nonunits \(b,b',c',c\in R\) such that \(b\) is similar to \(c\). An element that is not c-reducible will be called \textit{c-irreducible}. 
		
		As the zero ideal is completely prime as a left ideal in any domain, we only need to deal with nonzero left ideals. The following result gives a description of nonzero completely prime left ideals of PIDs. 
		\begin{proposition}\label{prop: completely prime ideal of PID}
			A nonzero left ideal \(Ra\) of a PID \(R \) is completely prime iff  \( a \) is  c-irreducible.
		\end{proposition}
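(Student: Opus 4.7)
The plan is to prove both directions by contrapositive, using the characterization that $Ra$ fails to be completely prime exactly when there exist $x,y\in R$ with $x,y\notin Ra$, $xy\in Ra$, and $(Ra)y\subseteq Ra$.

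For the direction ``$Ra$ completely prime $\Rightarrow a$ c-irreducible'', I would suppose $a$ is c-reducible, writing $a=bb'=c'c$ with all four factors nonunits and $b\sim c$. Applying Proposition \ref{prop: similarity in PID} to $c\sim b$ produces $\alpha,\beta\in R$ with $c\alpha=\beta b$ and $R\alpha+Rb=R$. The witnesses violating complete primeness are $x=c$ and $y=\alpha b'$. Direct substitution yields $xy=c\alpha b'=\beta bb'=\beta a\in Ra$ and $ay=c'c\alpha b'=c'\beta bb'=c'\beta a\in Ra$, hence $(Ra)y\subseteq Ra$. Neither witness lies in $Ra$: a relation $c=ra$ rewrites as $c=rc'c$ and right-cancels to $rc'=1$, forcing $c'$ to be a unit (by noetherianness of the PID); a relation $\alpha b'=rbb'$ right-cancels to $\alpha\in Rb$, which combined with $R\alpha+Rb=R$ forces $b$ to be a unit. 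Both conclusions contradict the nonunit hypothesis.

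For the direction ``$a$ c-irreducible $\Rightarrow Ra$ completely prime'', I would suppose $Ra$ is not completely prime and take witnesses $x,y$. Since $R$ is a PID, write $(Ra:y)=Re$ and $Ry+Ra=Rg$. The hypothesis $(Ra)y\subseteq Ra$ places $a\in Re$, so $a=fe$ for some $f$; the inclusion $Ra\subseteq Rg$ gives $a=hg$ for some $h$; and $xy\in Ra$ places $x\in Re$. The two $R$-module maps $R\to R/Ra$ defined by $r\mapsto ry+Ra$ and $r\mapsto rg+Ra$ have kernels $Re$ and $Rh$ respectively (the latter via right-cancellation of $g$ in $rg=shg$), with common image $Rg/Ra$. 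This produces isomorphisms $R/Re\cong Rg/Ra\cong R/Rh$, so $e\sim h$. To complete the c-reducible factorization I would verify that $e,f,h,g$ are all nonunits: $e$ is nonunit because $y\notin Ra$; $f$ is nonunit because $x\in Re\setminus Ra$ forces $Re\neq Ra$; $h$ is nonunit because $R/Rh\cong R/Re$ is nonzero; and $g$ is nonunit because $g$ being a unit would give $R/Ra=Rg/Ra\cong R/Re$, forcing $e\sim a$, which contradicts $a=fe$ with $f$ nonunit (the length of $a$ strictly exceeds the length of $e$, by unique factorization). Renaming $(b,b',c',c):=(h,g,f,e)$ then exhibits $a$ as c-reducible.

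The main conceptual hurdle is identifying the witness $y=\alpha b'$ in the first direction; the choice is not naturally suggested by the hypotheses and becomes transparent only after running the second direction first and reversing its construction, so that the guess $y=\alpha g$ for the desired annihilator structure becomes evident. A secondary technical point is that establishing the nonunit status of $g$ in the second direction genuinely requires the length function (atom count) from unique factorization, since the purely module-theoretic output $R/Re\cong Rg/Ra$ does not by itself exclude $g$ from being a unit.
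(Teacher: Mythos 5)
Your argument is correct, and its overall architecture matches the paper's: both directions are proved in contrapositive form, the c-reducible direction uses a comaximal relation from Proposition \ref{prop: similarity in PID} with exactly the witnesses $c$ and $\alpha b'$ (the paper's $c$ and $xb'$), and the converse extracts the same two principal ideals $(Ra:y)=Re$ and $Ry+Ra=Rg$ to produce the factorizations $a=fe=hg$. The one genuine divergence is how the similarity $e\sim h$ is established in the converse: the paper manipulates the relation $b_0x=y_0c_2$, extracts a common left factor to obtain a comaximal relation $b_1x=yc_2$, and then invokes Proposition \ref{prop: similarity in PID}, whereas you read off $R/Re\cong Rg/Ra\cong R/Rh$ directly from the two evaluation maps $r\mapsto ry+Ra$ and $r\mapsto rg+Ra$. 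Your route is cleaner and bypasses the comaximal machinery entirely (and, incidentally, shows that the paper's common factor $d$ is forced to be a unit, so the two similar pairs coincide). You also supply two verifications the paper only asserts: that the witnesses $c$ and $\alpha b'$ lie outside $Ra$ in the forward direction, and that all four factors $e,f,h,g$ are nonunits in the converse --- in particular your observation that ruling out $g$ being a unit genuinely requires the length invariant from unique factorization is accurate, since the paper's parallel claim that $c_1$ is a nonunit needs the same argument. One trivial quibble: the right-cancellation $rc'=1\Rightarrow c'$ is a unit already follows from $R$ being a domain (from $(c'r-1)c'=0$), so the appeal to noetherianness is unnecessary, though not wrong.
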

		\begin{proof}
			We prove the contrapositive of the statement, that is, \( a \) is  c-reducible iff \(Ra\) is not completely prime. First,
			assume that \(a\) is c-reducible. \(a = bb' = c'c\) for some nonunits \(b,b',c',c\in R\) such that \(b\) is similar to \(c\).   By Proposition \ref{prop: similarity in PID},  there exists a comaximal relation \(c x = y b\) in \(R\). We have \(c(xb') = ybb'\in Ra\). Moreover, \((Ra)xb'=Rc'cxb'=Rc'ybb'\subseteq Ra\). Since \(c\notin Ra\) and \(xb'\notin Ra\), we conclude that \(Ra\) is not completely prime. This completes the proof of the forward direction. 
			
			Conversely, suppose that \(Ra\) is not completely prime. Then, there exist \(b,c\in R\setminus Ra\) such that \(bc\in Ra\) and \((Ra)c\subseteq Ra\). It follows that \(b\in (Ra:c)\) and \(a\in (Ra:c)\). We have \((Ra:c) = Rb_0\) for some \(b_0\in R\). Therefore, \(a = b'b_0 \)  for some \(b'\in R\). Since \(Ra+Rc = Rc_1\) for some \(c_1\in R\), we see that \(a=c_2c_1\) and  \(c=x c_1\) for some \(c_2,x\in R \) such that \(Rx+Rc_2=R\). It follows from \((Ra:c) = Rb_0\) that \(b_0c = y_0 a \) for some \(y_0\in R\). Therefore,  \(b_0xc_1 = b_0c =  y a = y c_2c_1 \), implying \(b_0x=y_0c_2\). We have \(b_0 = d b_1\) and \(y_0 = d y\) for some \(d,b_0,y\) such that \(b_1R+yR=R\). Since \(b_1x = y c_2 \), \(Rx+Rc_2=R\), and \(b_1R+yR=R\), it follows from Proposition \ref{prop: similarity in PID} that \(b_1,c_2\) are similar. Since \(a = (b'd)b_1 = c_2c_1\), we conclude that \(a\) is c-reducible. Note that \(b'd,b_1,c_2,c_1\) are nonunits, thanks to the assumption \(b,c\in R\setminus Ra\). 
		\end{proof}
		Since the notion of c-irreducibility is left-right symmetric, it follows from Proposition \ref{prop: completely prime ideal of PID} that \(Ra\) is a completely prime left ideal iff \(aR\) is a completely prime right ideal. 
		
		It is clear that every irreducible element is c-irreducible. The converse does not hold, as shown in the following example:
		\begin{example}
			Let \(\frac{d}{dx}\) denote the derivative on the ring \(\mathbb{R}(x)\) of rational functions over  \(\mathbb{R}\). Consider the skew polynomial ring \(R = \mathbb{R}(x)[t;1,\frac{d}{dx}]\). It is known that \(R\) is a PID. Set \(f = (t+x)t\in R\). If \(f = (t+a(x))(t+b(x))\) is a factorization of \(f\) in \(R\), then \(b(x)\) must satisfy the Bernoulli equation \(y'+xy=y^2\) whose only rational solution is \(y=0\). It follows that the element \(f\) has a unique factorization \(f = (t+x)t\) into monic nonconstant polynomials. It is easy to verify that \(t+x\) and \(t\) are not similar in \(R\).  Therefore, \(f\) is  c-irreducible, but not irreducible. For another example of a c-irreducible element that is not irreducible, see \cite[Proposition 4.5]{alon2022completely}. 
		\end{example}
		
		 Under certain assumptions, a c-irreducible element must be irreducible: 
		
		\begin{proposition}\label{prop: completely prime with bounded factor}
			Let \(a\in R\) be an element in a PID. Assume that \(a\) has a non-unit bounded factor. Then \(a\) is c-irreducible iff it is irreducible.
		\end{proposition}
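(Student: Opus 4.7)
The $(\Leftarrow)$ direction is immediate: if $a$ is irreducible, then any factorization $a = bb'$ has $b$ or $b'$ a unit, so $a$ cannot be c-reducible (which demands two factorizations of $a$ into four non-unit factors).

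For $(\Rightarrow)$ I argue contrapositively: assume $a$ is not irreducible and that $a$ has a non-unit bounded factor; the goal is to show $a$ is c-reducible. Write $a = a_1 \cdots a_n$ as a product of atoms with $n \geq 2$. A short argument shows that at least one $a_i$ must be bounded: the atomic factors of the given bounded factor of $a$ are themselves bounded (boundedness of a product of atoms requires each atomic factor to be bounded, as one sees by analyzing the annihilators of the composition factors of the corresponding cyclic module), and these reappear up to similarity among the $a_i$; similarity preserves boundedness, since similar elements share the same bound. I then split into two cases, according to whether such a bounded $a_i$ can be chosen invariant.

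In the invariant case, $Ra_i = a_iR$ yields a ring automorphism $\psi$ of $R$ defined by $a_i r = \psi(r) a_i$ for all $r \in R$. Iterating this relation, I slide $a_i$ to each end of the factorization, obtaining $a = a_i \cdot \psi^{-1}(a_1 \cdots a_{i-1}) \, a_{i+1} \cdots a_n$ and $a = a_1 \cdots a_{i-1} \, \psi(a_{i+1} \cdots a_n) \cdot a_i$, both with all factors non-units (since $n \geq 2$). Taking $b = c = a_i$ in the c-reducibility definition gives $b \sim c$ trivially, so $a$ is c-reducible. In the non-invariant bounded case, I exploit the bound $a_i^*$: the quotient $R/Ra_i^*$ is a simple Artinian ring $\cong M_k(D)$ for some division ring $D$, and atoms similar to $a_i$ correspond to the maximal left ideals of this matrix ring, forming a rich family. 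The plan is to use this flexibility to construct an alternative atomic factorization of $a$ whose outer atom lies in the similarity class of an atom appearing at a non-outer position of the original factorization, so that matching these two factorizations yields $a = bb' = c'c$ with $b \sim c$ and all four factors non-units.

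The main obstacle is the non-invariant case, which requires a careful rearrangement of atomic factorizations using the structure of the simple Artinian quotient $R/Ra_i^*$ and the parametrization of the similarity class of $a_i$. The invariant case is by contrast a direct commutation argument using the automorphism attached to the invariant element.
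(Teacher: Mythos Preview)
Your $(\Leftarrow)$ direction, the reduction to a bounded atomic factor $a_i$, and the invariant sub-case are all correct. The genuine gap is the non-invariant case, which you explicitly leave as a ``plan'' rather than a proof. This is not a minor loose end: an invariant atom is a very special object (it generates a maximal two-sided ideal), so the non-invariant bounded case is the generic one and carries essentially all the content of the proposition. Your sketch---exploit the simple Artinian structure of $R/Ra_i^*$ to produce an alternative factorization---points in the right direction but does not supply the actual mechanism, namely a rearrangement result showing that an atom similar to the bounded $a_i$ can be brought to the leftmost position of one atomic factorization of $a$ and to the rightmost position of another.

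The paper handles both cases uniformly, with no invariant/non-invariant split, by invoking Cohn's Lemma~6.2.5 (\emph{Free Ideal Rings and Localization in General Rings}): for a bounded atom $p_i$ one may gather all atomic factors of $a$ that are similar to $p_i$ into a block on the left, and also into a block on the right, with the two blocks similar to each other. This yields $a=(p_{i_1}\cdots p_{i_m})\,b=c\,b'$ with $(p_{i_1}\cdots p_{i_m})\sim b'$, and c-irreducibility then forces $a$ to be an atom. Your invariant case is just the degenerate instance in which the ``gathering'' is literal commutation past $a_i$; Cohn's lemma subsumes it.
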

		\begin{proof}
			We only need to prove the forward direction. Let \(a = p_1\cdots p_n\in R\) be a factorization of a c-irreducible element \(a\) into irreducible elements. Since \(a\) has a bounded factor, some \(p_i\) is bounded. Let \(p_{i_1},\dots,p_{i_m}\) be all the factors that are similar to \(p_i\). It follows from Lemma 6.2.5 in  \cite[p. 344]{cohn2006free}  that 
			\[
				a = (p_{i_1}\cdots p_{i_m}) b = c b',
			\]
			where \((p_{i_1}\cdots p_{i_m})\) and \(b'\) are similar (see also the discussion following the lemma in Cohn's book \cite[p. 344]{cohn2006free}). Since \(a\) is c-irreducible, \(m=1\)  and \(a\) must be similar to \(p_i\), implying that \(a\) is irreducible.
		\end{proof} 
	Following Feller \cite{feller1961factorization}, we call a PID \textit{bounded} if every non-unit element in the PID is bounded. It is easy to show that a PID \(R\) is bounded iff every atom in \(R\) is bounded. Consequently, a PID is bounded iff it is not (left, or equivalently, right) primitive. For a  detailed study of the notion of ``boundedness" for arbitrary rings, see \cite[Chapter 1]{goodearl2004introduction}. As a direct consequence of Proposition \ref{prop: completely prime with bounded factor}, we present the following result. 
	\begin{corollary}\label{prop: bounded PID}
		Let \(R\) be a bounded PID. Then, \(a\in R\) is c-irreducible iff it is irreducible. In other words, in any bounded PID, a nonzero left ideal is completely prime as a left ideal iff it is maximal as a left ideal.
	\end{corollary}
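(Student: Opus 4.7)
The plan is to apply Proposition \ref{prop: completely prime with bounded factor} essentially verbatim, using the fact that in a bounded PID every non-unit is bounded, and then to translate the ``c-irreducible iff irreducible'' conclusion into the ideal-theoretic language of completely prime versus maximal left ideals via Proposition \ref{prop: completely prime ideal of PID}.

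First, I would observe that the interesting case is when \(a\) is a non-zero non-unit (units and zero are handled trivially). In a bounded PID, \(a\) itself is a bounded element, so \(a\) is a non-unit bounded factor of itself (via \(a = 1 \cdot a \cdot 1\)). Hence the hypothesis of Proposition \ref{prop: completely prime with bounded factor} is satisfied, and one concludes immediately that \(a\) is c-irreducible iff \(a\) is irreducible. This gives the first assertion.

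For the ``in other words'' reformulation, I would combine two ingredients. By Proposition \ref{prop: completely prime ideal of PID}, a nonzero left ideal \(Ra\) is completely prime iff \(a\) is c-irreducible, which by the first part is equivalent to \(a\) being irreducible. So it remains to verify the standard PID fact that \(Ra\) is maximal as a left ideal iff \(a\) is irreducible. For one direction, if \(a\) is irreducible and \(Ra \subseteq Rb \subsetneq R\), then \(a = cb\) with \(b\) a non-unit, forcing \(c\) to be a unit by irreducibility, so \(Ra = Rb\); thus \(Ra\) is maximal. Conversely, if \(Ra\) is maximal and \(a = bc\) with neither \(b\) nor \(c\) a unit, then \(Ra \subseteq Rb \subsetneq R\) (since \(b\) is a non-unit), and maximality forces \(Rb = Ra\), giving \(b = ua\) for some unit \(u\), whence \(a = uac\) and so \(c\) is a unit, a contradiction.

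There is no real obstacle here: the entire content has been isolated in Proposition \ref{prop: completely prime with bounded factor} and Proposition \ref{prop: completely prime ideal of PID}, and the only mildly non-routine check is the equivalence between irreducibility of \(a\) and maximality of \(Ra\) in a PID, which is elementary once one remembers that containment \(Ra \subseteq Rb\) corresponds to \(a\) being a right multiple of \(b\). The proof should therefore be a single short paragraph.
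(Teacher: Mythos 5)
Your overall route is exactly the paper's: the corollary is stated there as a direct consequence of Proposition \ref{prop: completely prime with bounded factor}, and your observation that in a bounded PID a nonzero non-unit \(a\) is itself a non-unit bounded factor of \(a\) (via \(a = 1\cdot a\cdot 1\)) is precisely what makes that proposition apply. Combining with Proposition \ref{prop: completely prime ideal of PID} and the equivalence between irreducibility of \(a\) and maximality of \(Ra\) is also the intended reading of the ``in other words'' clause, which the paper leaves unproved.

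There is, however, one step that fails as written. In the converse half of your maximality argument you go from \(a = bc\) to \(Ra \subseteq Rb\); in a noncommutative domain \(a = bc\) gives \(a \in Rc\) (and \(a \in bR\)), not \(a \in Rb\), so the inclusion \(Ra \subseteq Rb\) does not follow. The repair is immediate: from \(a = bc\) with \(b,c\) non-units you get \(Ra \subseteq Rc \subsetneq R\) (proper because \(c\) is a non-unit), maximality forces \(Rc = Ra\), hence \(c = ua\) for a unit \(u\), so \(a = bua\) and cancelling \(a\) on the right in the domain \(R\) gives \(bu = 1\), making \(b\) a unit --- a contradiction. Note that your forward direction is consistent with this convention: \(Ra \subseteq Rb\) means \(a = cb\), i.e.\ the left ideals containing \(Ra\) correspond to \emph{right} factors of \(a\). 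With that one-line correction the proof is complete and matches the paper's intent.
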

		 We now give some examples of bounded PIDs.   
		\begin{example}
			Let \(D\) be a division ring. It is known that  \(D[t]\) is a bounded PID if every square matrix over \(D\) is algebraic over the center of \(D\) \cite[Theorem 17.2.]{goodearl2004introduction}. In particular, if \(D\) is centrally finite, i.e., is of finite dimension over its center, then \(D[t]\) is a bounded PID. 
		\end{example} 
		\begin{example}
			Let \(\delta\) be a derivation on a field \(k\) of characteristic \(p>0\). If \(\delta\) is nilpotent, then \(R = k[t;\delta]\) is a bounded PID \cite[Exercise 9D]{goodearl2004introduction}. It can also be proved using the results in the next example.   
		\end{example}  
		\begin{example}
			For the terminology used in this example, we refer the reader to \cite{leroy1995pseudo}. A skew polynomial ring \(D[t;\sigma,\delta]\) over a division ring \(D\) is left primitive iff \(D[t;\sigma,\delta]\) satisfies one of the following conditions: (1) \(D[t;\sigma,\delta]\) is simple; (2)  \(D[t;\sigma,\delta]\) is not simple and no positive power of \(\sigma\) is an inner automorphism; (3)  \(D[t;\sigma,\delta]\) is not simple, a positive power of \(\sigma\) is an inner automorphism, and there exist \(n\geq 1\) and a square matrix \(A\in M_n(D)\) such that \(A\) is not algebraic over the center of \(D\) \cite[Theorem 3.26]{leroy1995pseudo}. Since boundedness and nonprimitiveness are equivalent for PIDs, we see that \(D[t;\sigma,\delta]\) is a bounded PID iff \(D[t;\sigma,\delta]\) is not simple, some positive power of \(\sigma\) is an inner automorphism, and every square matrix over \(D\) is algebraic over the center of \(D\). As an example, if \(\sigma:k\to k\) is an automorphism of finite order over a field \(k\), then \(k[t;\sigma]\) is a bounded PID. In particular,  a left ideal of \(k[t;\sigma]\) is completely prime iff it is a maximal left ideal. For another proof of the last statement, see \cite[Theorem 1.1]{alon2022completely}. 
		\end{example} 
		\begin{example}
			In this example, we assume that the reader is familiar with the work of  Lam and Leroy \cite{lam2004wedderburn}. Let \( D[t;\s,\delta] \) be a skew polynomial ring over a division ring. Suppose that the \((\sigma,\delta)\)-conjugacy class of an element \(a\in D\) is \((\sigma,\delta)\)-algebraic. Then, any polynomial \(f(t)\) of degree at least \(2\) that has \(a\) as a right root is necessarily c-reducible.  This is because, by Corollary 6.3 in \cite{lam2004wedderburn}, \(f(t)\) must have a left root \(b\) that is \((\sigma,\delta)\)-conjugate  to \(a\). Note that \(t-a\), a proper left factor of \(f(t)\), and \(t-b\),  a proper right factor of \(f(t)\), are similar. In particular, for such an element \(f\), the left ideal \(D[t]f(t)\) is never completely prime as a left ideal. 
		\end{example} 
		
	\end{subsection}
	\begin{subsection}{Structurally Prime Left Ideals in PIDs}		
		To give a characterization of structurally prime left ideals in a PID, we need to review some concepts. Recall that an invariant non-unit \(a\in R\) is called \textit{Inv-irreducible} (or an \textit{Inv-atom}) if \(a = bc\), where \(b,c\in R\) are invariant, implies that either \(b\) or  \(c\) is a unit. It is easy to see that \(a\in R\) is Inv-irreducible iff \(Ra\) is  a maximal ideal \cite[Theorem 6.2.4 (i)]{cohn2006free}. It is also known that an element \(a\in R\)  in a PID is Inv-irreducible iff \(a\) is the bound of a bounded atom \(b\), in which case \(a\) is a product of similar bounded atoms \cite[Theorem 6.2.4 (ii), (iii)]{cohn2006free}.  A nonzero non-unit is called \textit{left totally unbounded} if it has no non-unit bounded left factors. 
		\begin{proposition}\label{prop: structurally prime ideal of PID}
			A nonzero left ideal \(Ra\) of a PID \(R \) is  structurally prime iff  \( a \) is  left totally unbounded or a factor of an Inv-atom.
		\end{proposition}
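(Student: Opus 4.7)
I will prove both directions using the characterization of Proposition~\ref{prop: characterization of structurally prime ideals}(5): $Ra$ is structurally prime iff for every two-sided ideal $J$ of $R$ and every $b \in R$, $Jb \subseteq Ra$ implies $J \subseteq Ra$ or $b \in Ra$.

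For the forward direction, suppose $Ra$ is structurally prime and $a$ is not left totally unbounded, so $a = ba'$ with $b$ a bounded non-unit of bound $b^*$. Since $b^* \in Rb$, the inclusion $Rb^* \cdot a' \subseteq Ra$ holds, and applying structural primeness to the two-sided ideal $Rb^*$ rules out the alternative $a' \in Ra$ (which would give $1 = sb$ and contradict $b$ being a non-unit); hence $Rb^* \subseteq Ra$, so $a$ is bounded. Let $a^*$ be its bound. Since $a^* \in aR$, writing $a^* = ay$ exhibits $a$ as a factor of $a^*$, and it remains to verify $a^*$ is Inv-irreducible. Supposing $a^* = c_1 c_2$ with $c_1, c_2$ invariant non-units, the relation $Rc_1 \cdot c_2 = Ra^* \subseteq Ra$ forces $Rc_1 \subseteq Ra$ or $c_2 \in Ra$; in the former case $Rc_1 \subseteq Ra^* = Rc_1 c_2$ combined with the invariance identity $c_1 c_2 = \sigma(c_2) c_1$ cancels to make $c_2$ a unit, and in the latter case $Rc_2 \subseteq Rc_1 c_2$ cancels to make $c_1$ a unit.

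For the reverse direction in the case where $a$ is a factor of an Inv-atom $c^* = uav$, I first show $Rc^* \subseteq Ra$. Since $c^* = (ua) v$ lies in $(ua) R$, we have $Rc^* \subseteq (ua) R$, and the general property recorded in Section~\ref{sec: primness for PID}---that $I \subseteq a_0 R$ iff $I \subseteq R a_0$ for any two-sided ideal $I$---applied to $a_0 = ua$ converts this to $Rc^* \subseteq R(ua) = Rua$; since $r(ua) = (ru) a \in Ra$ for every $r \in R$, it follows that $Rc^* \subseteq Ra$. Now $\bar R := R/Rc^*$ is simple (because $Rc^*$ is a maximal two-sided ideal) and left Noetherian, so by Proposition~\ref{prop: simplicity using structurally prime left ideals} every left ideal of $\bar R$ is structurally prime; in particular $Ra/Rc^*$ is. A routine pullback argument, using $Rc^* \subseteq Ra$ to identify $\bar J \subseteq \overline{Ra}$ with $J \subseteq Ra$ and $\bar b \in \overline{Ra}$ with $b \in Ra$, lifts this to structural primeness of $Ra$ in $R$.

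For the remaining reverse case, assume $a$ is a left totally unbounded non-unit, and let $J$ be a two-sided ideal with $Jb \subseteq Ra$ and $b \notin Ra$; I will show $J \subseteq Ra$. Write $(Ra : b) = Rh$, where we may assume $h \neq 0$, and decompose $Ra + Rb = Rd$ with $a = a_2 d$, $b = b_2 d$, and $Ra_2 + Rb_2 = R$. The condition $b \notin Ra$ is equivalent (after cancelling $d$) to $b_2 \notin Ra_2$, which combined with $Ra_2 + Rb_2 = R$ forces $a_2$ to be a non-unit; thus $a_2$ is a non-unit left factor of $a$, and by left total unboundedness it is unbounded. The primitive relation $hb = ra$ arising from the generator of the rank-one kernel of $(x, y) \mapsto xa - yb$ on $R^2$ satisfies $rR + hR = R$, since any common non-unit left factor of $r, h$ could be cancelled to produce a strictly larger kernel generator. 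Cancelling $d$ in $hb_2 d = r a_2 d$ gives the comaximal relation $h b_2 = r a_2$; reading this as $h \cdot b_2 = r \cdot a_2$, Proposition~\ref{prop: similarity in PID} yields $h \sim a_2$, and since similar elements share the same bound, the unboundedness of $a_2$ transfers to $h$. Consequently $Rh$ contains no nonzero two-sided ideal, and $J \subseteq Rh$ forces $J = 0 \subseteq Ra$. The main subtlety will be this last case: verifying the coprimality $rR + hR = R$ via the $R^2$-kernel argument and recognizing that Proposition~\ref{prop: similarity in PID} can be read to deliver the similarity $h \sim a_2$ that drives the bound transfer.
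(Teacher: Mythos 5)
Your proof is correct, but it follows a genuinely different route from the paper's in several places, most notably in the reverse direction. In the forward direction the paper passes to a bounded \emph{atomic} left factor $p$ of $a$, shows the bound of $p$ lands in $Ra$, and then cites Cohn's Theorem 6.2.4(iii) to identify that bound as an Inv-atom; you instead apply Proposition \ref{prop: characterization of structurally prime ideals}(5) twice, first to show that $a$ itself is bounded and then to show directly that its bound $a^*$ is Inv-irreducible. This is self-contained (no appeal to the classification of Inv-atoms as bounds of bounded atoms) and yields the slightly sharper conclusion that the Inv-atom can be taken to be the bound of $a$. In the reverse direction the paper runs a single computation: from $bRc \subseteq Ra$ and $Ra + Rc = Rx$ it deduces $bR \subseteq Ra_1$, so the cofactor $a_1$ is bounded, and then splits into the two cases. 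You treat the two cases by entirely different means: for the Inv-atom case you pass to the simple left Noetherian quotient $R/Rc^*$ and pull back structural primeness via Proposition \ref{prop: simplicity using structurally prime left ideals} --- slicker, but it imports the Lambek--Michler theorem where the paper needs only elementary ideal arithmetic; for the totally unbounded case you identify $(Ra:b) = Rh$, extract the comaximal relation $hb_2 = ra_2$, and invoke Proposition \ref{prop: similarity in PID} to transfer unboundedness from $a_2$ to $h$, which is essentially the machinery the paper deploys for completely prime ideals (Proposition \ref{prop: completely prime ideal of PID}) rather than here, and is more elaborate than the paper's one-line observation that $bR \subseteq Ra_1$ already bounds $a_1$. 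All steps check out, including the primitivity of the kernel generator $(r,h)$ and the cancellation arguments using invariance; the only cosmetic blemish is the undefined $\sigma$ in the ``invariance identity,'' which should just read $c_1c_2 = c_2'c_1$ for some $c_2' \in R$.
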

		\begin{proof}
			To prove the forward direction, suppose that \(Ra\) is structurally prime. If \(a\) is left totally bounded, there is nothing to prove. Assume that \(a\) is not left totally bounded. We have \(a = pb\), where \(p\) is a bounded irreducible left factor of \(a\). Let \(c = rp\) be the bound of \(p\). Then \(cRb=R(rp)b\subseteq Ra\). Since \(Ra\) is structurally prime and \(b\notin Ra\), we see that \(c\in Ra\), i.e., \(a\) is a factor of the bound of an atom, which is Inv-reducible by \cite[Theorem 6.2.4 (iii)]{cohn2006free}. This completes the proof of the forward direction.

			Conversely, suppose that \( a \) is  left totally unbounded or a factor of an Inv-irreducible element. Suppose \(bRc\subseteq Ra\) for some nonzero \(b,c\in R\). To prove that \(Ra\) is structurally prime, we need to show that \(b\in Ra\) or \(c\in Ra\). We have \(Ra+Rc=Rx\) for some \(x\in R\). It follows that \(a=a_1x, c=c_1x\) for some \(a_1,c_1\in R\) such that \(Ra_1+Rc_1=R\). Multiplying by \(b\), we obtain 
			\[	
			bR =  bRa_1 + bRc_1 \subseteq Ra_1.
			\]			
			It follows that \(a_1\) is bounded. If \(a\) is left totally bounded, \(a_1\) must be a unit, and consequently, \(c\in Ra\). We may, therefore, assume that \( a \) is  a factor of an Inv-atom \(p\) and \(a_1\) is a non-unit. Let \(d\) be the bound of \(a_1\). Note that \(b\in Rd\). Since \(a\in a_1R\), \(p\) belongs to \(Rd\). Since \(p\) is Inv-irreducible, we see that \(Rd = Rp\), hence \(b\in Rd = Rp\subseteq Ra\). This completes the proof.  
		\end{proof}
		As a direct consequence of Proposition \ref{prop: characterization of structurally prime ideals}, we see that a nonzero two-sided ideal of a PID \(R\) is prime as a two-sided ideal iff it is a maximal two-sided ideal. 
		\begin{example}\label{exam: structurally prime ideals in D[x]}
			Let \(D\) be a division ring with center \(k\). It is easy to see that a  monic polynomial \(f\in D[x]\) is invariant iff \(f\in k[x]\). Moreover, a nonconstant polynomial \(f\in k[x]\) is Inv-irreducible iff \(f\) is irreducible as an element of \(k[x]\). By Proposition \ref{prop: characterization of structurally prime ideals}, a nonzero left ideal \(D[x] f(x) \neq D[x]\) is structurally prime, iff \(f(x)\) is not a factor of a nonzero  polynomial in \(k[x]\), or \(f(x)\) is a factor of an irreducible polynomial in \(k[x]\). In the case where \(D = \mathbb{H}\) is the division ring of quaternions, the center of \( \mathbb{H}[x]\) is \( \mathbb{R}[x]\),  and  a nonconstant polynomial \(f\in \mathbb{R}[x]\) is Inv-irreducible iff it has degree \(1\) or \(f(t) = t^2+rt+s\) , where \(r^2<4s\). 
		\end{example}
		
		In general rings, a completely prime ideal may not be structurally prime (see Example \ref{exam: not structurally, completely}). This is, however, true for PIDs:  
		\begin{proposition}\label{prop: structurally prime implies completely prim in PID}
			A completely prime left ideal in any PID is structurally prime 
		\end{proposition}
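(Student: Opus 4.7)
The plan is to deduce the implication directly from the characterizations already established: Proposition \ref{prop: completely prime ideal of PID} (completely prime $\Leftrightarrow$ $a$ is c-irreducible) and Proposition \ref{prop: structurally prime ideal of PID} ($Ra$ structurally prime $\Leftrightarrow$ $a$ is left totally unbounded or a factor of an Inv-atom). I first dispose of the zero ideal: in a PID, which is a domain, the product of two nonzero left ideals contains a nonzero element (pick $0\neq x\in \textswab{A}$, $0\neq y\in \textswab{B}$, then $xy\neq 0$), so the zero left ideal is automatically structurally prime.

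For a nonzero completely prime left ideal $Ra$, Proposition \ref{prop: completely prime ideal of PID} says $a$ is c-irreducible. I would then split into two cases according to whether $a$ is left totally unbounded. If it is, Proposition \ref{prop: structurally prime ideal of PID} is immediate. If $a$ is not left totally unbounded, then $a$ has a non-unit bounded left factor, so by Proposition \ref{prop: completely prime with bounded factor} the c-irreducible element $a$ is in fact irreducible. Because $a$ is irreducible, any non-unit left factor $p$ of $a$ satisfies $a=pu$ with $u$ a unit, hence $Ra=Rp$; since $p$ is bounded, $Ra$ contains a nonzero ideal, which shows $a$ itself is bounded.

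Now let $a^*$ be a bound of $a$. By Cohn's description of Inv-atoms cited earlier (\cite[Theorem 6.2.4 (ii)]{cohn2006free}), the bound of a bounded atom is an Inv-irreducible element. From $Ra^*\subseteq Ra$ one gets $a^*=ra$ for some $r\in R$, so $a$ is a factor of the Inv-atom $a^*$. Invoking Proposition \ref{prop: structurally prime ideal of PID} again gives that $Ra$ is structurally prime, completing the proof.

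The only subtle step, and the one I expect will deserve the most care in writing, is the observation that an irreducible element possessing a non-unit bounded left factor is itself bounded; everything else is a direct concatenation of previous propositions. No genuine obstacle is foreseen, since the hard content has been packaged into Propositions \ref{prop: completely prime ideal of PID}, \ref{prop: completely prime with bounded factor}, and \ref{prop: structurally prime ideal of PID}.
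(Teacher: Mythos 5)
Your argument is correct and follows essentially the same route as the paper's proof: reduce to c-irreducibility via Proposition \ref{prop: completely prime ideal of PID}, split on whether $a$ is left totally unbounded, use Proposition \ref{prop: completely prime with bounded factor} in the bounded case, and conclude with Proposition \ref{prop: structurally prime ideal of PID} (the paper states this more tersely, leaving implicit the step you spell out that a bounded atom is a factor of its Inv-irreducible bound). One harmless slip: from $a=pu$ with $u$ a unit you get $aR=pR$, not $Ra=Rp$; your conclusion that $a$ is bounded still stands, since boundedness can be tested equivalently on $aR$ or $Ra$.
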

		\begin{proof}
			Let \(Ra\) be a nonzero completely prime ideal in a PID \(R\). By Proposition \ref{prop: completely prime with bounded factor}, \(a\) is either irreducible or totally unbounded. It follows from Proposition \ref{prop: structurally prime ideal of PID} that  \(Ra\) is structurally prime.   	
		\end{proof}
		The converse of Proposition \ref{prop: structurally prime implies completely prim in PID} does not hold, as demonstrated by the following example. 
		\begin{example}\label{exam: structurally not completely in a PID}
			The left ideal \(\textswab{p} = \mathbb{H}[t](t^2+1)\) is structurally prime since the invariant element \(t^2+1\) is Inv-irreducible.  However, \(\textswab{p}\) is not completely prime because \((t-i)(t+i)\in \textswab{p}\) and \(\textswab{p}(t+i)\subseteq \textswab{p}\), but \(t-i,t+i\notin \textswab{p}\). For another example, see \cite[Example 1]{alon2022completely}.
		\end{example}

	\end{subsection}
	\begin{subsection}{Weakly Prime Left Ideals in PIDs}
		In general, a two-sided ideal is weakly prime as a left ideal iff it is structurally prime as a left ideal. Therefore, we restrict our attention to left ideals that are not two-sided.  
		\begin{proposition}\label{prop: weakly prime ideal of PID }
			Let \(Ra\) be a left ideal in a PID \(R\) that is not two-sided. Then \(Ra\)  is  weakly prime iff  \( a \) has no non-unit invariant factors. In other words, a left ideal in a PID that is not two-sided is weakly prime iff it is not contained in a proper two-sided ideal. 
		\end{proposition}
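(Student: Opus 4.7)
My plan is to work with the reformulation ``$Ra$ is weakly prime iff $RaR=R$'' and to apply the characterization from Proposition~\ref{prop: characterization of weakly prime left ideals (not two-sided)}, specifically item~(3): $\textswab{p}$ is weakly prime iff $\textswab{p}Rb\subseteq \textswab{p}$ forces $b\in \textswab{p}$. The equivalence of the two stated formulations is quick from PID structure: $RaR$ is the smallest two-sided ideal containing $Ra$ and has the form $Rc=cR$ for some invariant $c\in R$. Since $a\in Rc$, this $c$ is itself an invariant factor of $a$; conversely, any invariant factor $d$ of $a$ satisfies $RaR\subseteq RdR=Rd$. Hence $a$ has no non-unit invariant factor iff this $c$ is a unit iff $RaR=R$ iff $Ra$ is not contained in any proper two-sided ideal.

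For the ``$\Leftarrow$'' direction, assume $RaR=R$ and write $1=\sum_i r_i a s_i$ with $r_i,s_i\in R$. Suppose $(Ra)Rb\subseteq Ra$, i.e.\ $aRb\subseteq Ra$. Then each $as_ib$ lies in $Ra$, whence
\[
 b \;=\; \Bigl(\sum_i r_i a s_i\Bigr) b \;=\; \sum_i r_i\,(as_ib) \;\in\; Ra,
\]
verifying (3) and so weakly-primeness.

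For the contrapositive of ``$\Rightarrow$'', suppose $a$ has a non-unit invariant factor $c$. Then $a\in Rc=cR$, so $a=cs$ for some $s\in R$; since $a\neq 0$ (otherwise $Ra=0$ is two-sided), $s\neq 0$. I take $b=s$ to falsify (3). For any $y\in R$, invariance of $c$ gives $c(sy)\in cR=Rc$, so there is $t\in R$ with $c(sy)=tc$, yielding
\[
 ays \;=\; c(sy)s \;=\; tcs \;=\; ta \;\in\; Ra,
\]
so $aRs\subseteq Ra$, hence $(Ra)Rs\subseteq Ra$. On the other hand, $s\in Ra=Rcs$ would mean $s=xcs$ for some $x\in R$, and cancellation in the domain $R$ (using $s\neq 0$) would give $xc=1$, contradicting that $c$ is a non-unit. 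Thus $s\notin Ra$ witnesses the failure of (3), and $Ra$ is not weakly prime.

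The main, rather mild, obstacle is the invariance manoeuvre turning $aRs$ into a subset of $Ra$, together with the domain-cancellation step ruling out $s\in Ra$; both are routine, and the rest is bookkeeping that rests on the cited characterization and the fact that two-sided ideals of a PID are principal and generated by invariant elements.
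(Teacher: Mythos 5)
Your proof is correct, and it splits from the paper's in an interesting way. The direction ``non-unit invariant factor $\Rightarrow$ not weakly prime'' is essentially the paper's argument: the paper writes $a=bx$ with $b$ an invariant non-unit and checks $aRx\subseteq bRx=Rbx=Ra$ while $x\notin Ra$, which is exactly your computation with $c,s$ in place of $b,x$ (you additionally spell out why $s\notin Ra$, which the paper leaves implicit). The other direction is genuinely different. The paper proves its contrapositive: from a witness $b\notin Ra$ with $aRb\subseteq Ra$ it forms $Ra+Rb=Rx$, writes $a=cx$, $b=dx$ with $Rc+Rd=R$, deduces $aR\subseteq Rc$, and concludes that $c$ is bounded with its bound a non-unit invariant factor of $a$. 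You instead reformulate the hypothesis as $RaR=R$, write $1=\sum_i r_ias_i$, and verify condition (3) of Proposition \ref{prop: characterization of weakly prime left ideals (not two-sided)} directly; this amounts to the paper's earlier observation that a proper left ideal $\textswab{p}$ with $\textswab{p}R=R$ is automatically weakly prime, since here $\textswab{p}R=RaR$. Your route is shorter and avoids the boundedness machinery entirely, while the paper's route explicitly exhibits the invariant factor as the bound of a left factor of $a$, which fits the bounded-element theme of the section. Your preliminary equivalence (no non-unit invariant factor iff $RaR=R$ iff $Ra$ lies in no proper two-sided ideal), which also justifies the proposition's second formulation, is handled correctly.
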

		\begin{proof}
			We prove the contrapositive of the statement, that is,  \( a \) has a non-unit invariant factor  iff \(Ra\) is not weakly prime. First, assume that  \( a \) has a non-unit invariant factor \(b\). It follows that \(a = bx\) for some \(x\in R\), implying \(aRx \subseteq bRx = Rbx = Ra\). Since \(x\notin Ra\), it follows from Proposition \ref{prop: characterization of weakly prime left ideals (not two-sided)} (see Part (3)) that \(a\) is not weakly prime. 
			
			Conversely, suppose that \(Ra\) is not weakly prime. Since \(Ra\) is not two-sided, by Part (3) of Proposition \ref{prop: characterization of weakly prime left ideals (not two-sided)}, there exists \(b\in R\setminus Ra\) such that \(aRb\in Ra\). We can write \(a = cx\) and \(b = dx\) for some \(b,d,x\in R\) such that \(Rc + Rd = R\). Note that \(aRd\subseteq Rc\). Since \(b\notin Ra\), \(c\) must be a non-unit. Furthermore, we have 
			\[
				aR = aRc + aRd \subseteq Rc.
			\]
			It follows that \(c\) is a bounded element whose bound is a  factor of \(a\),  completing the proof. 
		\end{proof}
		We end the article with the following example.  
		\begin{example}\label{exam: weakly prime ideals in D[x]}
			Let \(D\) be a division ring with center \(k\). Any left ideal of \(D[x]\) that is not two-sided is of the form \(D[x]f(x)\), where \(f(x)\) is a monic nonconstant polynomial not contained in \(k[x]\). For such a left ideal, \(D[x]f(x)\) is weakly prime iff \(f(x)\) does not have nonunit factos belonging to \(k[x]\).  In the case where \(D = \mathbb{H}\) is the division ring of quaternions, the reader can check that the left ideal \( \mathbb{H}[x](x-j)(x-i)\) is weakly prime, but not structurally prime.  
		\end{example}
	\end{subsection}

\end{section}


\bibliographystyle{plain}
\bibliography{PObiblan}

 \end{document}